\newtheoremstyle{plainsl}%
	{\topsep}
	{\topsep}
	{\slshape} 
	{}
	{\normalfont\bfseries}
	{.}
	{ }
	{}
\theoremstyle{plainsl}
\newtheorem{theorem}{Theorem}[section]
\newtheorem{lemma}[theorem]{Lemma}
\newtheorem{corollary}[theorem]{Corollary}}
\theoremstyle{remark}
\newtheorem{example}[theorem]{Example}}
\newtheorem{remark}[theorem]{Remark}
\newtheorem{question}[theorem]{Question}
\newcommand\cref[1]{Corollary~\ref{cor:#1}}
\newcommand\sqr[2]{{\vbox{\hrule height.#2pt
    \hbox{\vrule width.#2pt height#1pt \kern#1pt
        \vrule width.#2pt}\hrule height.#2pt}}}
\renewcommand\qed{%
	\ifmmode\eqno\sqr53
	\else\nolinebreak\ \hfill\sqr53\medbreak\fi}
\numberwithin{equation}{section}
\newcommand\al{\alpha}
\newcommand\be{\beta}
\newcommand\De{\Delta}
\newcommand\ga{\gamma}
\newcommand\Ga{\Gamma}
\newcommand\la{\lambda}
\newcommand\om{\omega}
\newcommand\sg{\sigma}
\renewcommand\th{\theta} 
\newcommand\cA{{\mathcal A}}
\newcommand\ints{{\mathbb Z}}
\newcommand\re{{\mathbb R}}
\newcommand\rats{{\mathbb Q}}
\newcommand\diff{\mathbin{\mkern-1.5mu\setminus\mkern-1.5mu}}
\newcommand\pmat[1]{\begin{pmatrix} #1 \end{pmatrix}}
\DeclareMathOperator{\diag}{diag}
\begin{document}

\title{Fractional revival on non-cospectral vertices}

\author{Chris Godsil\footnote{Research supported by 
Natural Sciences and Engineering Research Council of Canada, 
Grant No. RGPIN-9439},  Xiaohong Zhang\\
University of Waterloo}

\maketitle

\begin{abstract}
Perfect state transfer and fractional revival  can be used to move information
between pairs of vertices in a quantum network. While perfect state transfer has received a lot of  attention, fractional revival is newer and less studied. One problem is to determine the differences between perfect
state transfer and fractional revival. If perfect state transfer occurs between two vertices in a
graph, the vertices must be cospectral. Further if there is perfect state transfer between vertices
$a$ and $b$ in a graph, there cannot be perfect state transfer from $a$ to any other vertex.
No examples of unweighted graphs with fractional revival between non-cospectral vertices were known; here we give an infinite family of such graphs. No examples of unweighted graphs where the pairs involved in fractional revival overlapped were known; we give examples of such graphs as well.\\
\textbf{Keywords:} 
Quantum walks, Spectral graph theory, Fractional revival, Subset transfer\\
\textbf{MSC:} 05C50, 15A16, 81P45
\end{abstract}

\section{Introduction}

One important problem in quantum computing is to provide ways to transfer information from
one part of network of qubits to another. The issue is complicated by the fact that it is not possible
to copy a quantum state.

One approach to this is based on continuous quantum walks, as we now explain. Here we are
given a network of $n$ qubits, specified by a graph $X$ with adjacency matrix $A$.
Given $A$ we define the \textsl{transition matrix} $U(t)$ at time $t$ by 
\[
U(t) = \exp(itA).
\]
These matrices are unitary and act on a quantum system whose states are represented by
positive semidefinite matrices of size $n\times n$ and trace 1 (so-called \textsl{density matrices}).
If the initial state is given by $D$, the state of the system at time $t$ is $U(t)DU(-t)$.
This system is known as \textsl{continuous quantum walk}. (For more information, see \cites{thebook,drg,Periodic,godsil2017real,transfer}.)
We use $e_a$ to denote the standard basis vector indexed by the vertex $a$ in $X$
and set $D_a$ equal to $e_ae_a^T$. We have \textsl{perfect state transfer} from
vertices $a$ to vertex $b$ at time $t$ if $D_b=U(t)D_aU(-t)$; if this happens then
the state of the $b$-qubit at time $t$ is equal to the state of the $a$-qubit at time zero.  

Perfect state transfer takes place between vertices at distance $d$ in the $d$-cube,
as shown by Christandl et al. \cite{PST05}. Subsequent work yielded further examples \cites{circ,cubelike,drg},
but it also revealed that perfect state transfer is rare:  there are only
finitely many connected graphs with maximum valency $k$ that admit perfect state transfer \cite{transfer}.
Because of this, various extensions of the concept have been studied. Our concern in this paper is with fractional revival. 
We say that \textsl{fractional revival} occurs on the pair of vertices $\{a,b\}$ 
at time $t$, if 
\begin{equation}\label{eq:periab}
	U(t)(D_a+D_b)U(-t)=D_a+D_b.
\end{equation}
That is equivalent to say that $U(t)_{a,c}=U(t)_{b,c}=0$ for any $c\ne a,b$ (or $U(t)$ is  permutation-similar to a $2\times 2$ block diagonal matrix,  
with one diagonal block indexed by $a,b$). 
If fractional revival occurs on $\{a,b\}$ and  $U(t)D_aU(-t)\ne D_a$ (equivalently, $U(t)_{a,b}\ne 0$),  
then we say that \textsl{proper fractional revival} occurs on $\{a,b\}$.
It includes $U(t)D_aU(-t)= D_b$, 
that is, when there is perfect state transfer from $a$ to $b$,
as a  special case.

A number of papers \cites{frchain,frchain2,fr,gwithfr,FFR} have investigated fractional revival, providing examples
and developing the theory.
We discuss the contributions of this paper. There are two important properties of 
perfect state transfer:
\begin{enumerate}[(a)]
	\item 
	If there is perfect state transfer on $X$ from $a$ to $b$, then the vertex-deleted subgraphs
	$X\diff a$ and $X\diff b$ are cospectral (we say that $a$ and $b$ are cospectral
	vertices in $X$) \cite{transfer}.
	\item
	If there is perfect state transfer from $a$ to a second vertex, this vertex is unique \cite{pstmono}.
	(This is known as monogamy of perfect state transfer.)
\end{enumerate} 
Prior to our work, in all examples of proper fractional revival the two vertices involved were
cospectral. We provide an infinite family of examples where this is not the case.
Similarly, monogamy held in all known examples. We provide examples of graphs with
vertices $a$, $b$, $c$ such that proper fractional revival occurs on $\{a,b\}$ and $\{a,c\}$.
(Chan et al.\cite{FFR} constructed examples of weighted graphs where monogamy 
fails.)

Some of the tools we use are new. In particular we introduce the concept of support graphs,
graphs on eigenvalues of an underlying graph, and we also make use of matrix algebras associated
to induced subgraphs (induced algebras). 

\section{Preliminaries}

In this section, we associate a graph to a density matrix with respect to an underlying graph $X$,  
introduce a matrix algebra, and  
review some necessary conditions and a characterization for proper fractional revival to occur.

\subsection{Support graph}

Let $X$ be a graph on $n$ vertices with adjacency matrix $A$ and consider quantum walks on $X$. 
Assume $A$ has exactly $m$ distinct eigenvalues $\th_1,\ldots,\th_m$, 
with $E_1,\ldots, E_m$ being the corresponding projection matrices to the eigenspaces. 
Then $A=\sum_{r=1}^m \theta_r E_r$ is the spectral decomposition of $A$,
and $U(t)=\sum_{r=1}^m e^{it\theta_r}E_r$. 
Let $a,b\in V(X)$. 
If $(E_r)_{a,a}=(E_r)_{b,b}$ for all $r$, 
then $a$ and $b$ are cospectral (this is equivalent to the subgraphs $X\diff a$ and $X\diff b$ being  cospectral). 
If $E_re_a$ and $E_re_b$ are parallel vectors for each $r$, 
then we say $a$ and $b$ are \textsl{parallel}.

Let $D$ be a density matrix of size $n$. 
The \textsl{eigenvalue support} of $D$ is the set $\Phi_D=\{(\theta_r,\theta_s): E_rDE_s\ne0\}$. 
As $E_sDE_r=(E_rDE_s)^*$, 
we know that $(\theta_s,\theta_r)\in \Phi_D$ if and only if $(\theta_r,\theta_s)\in \Phi_D$. 
We define the \textsl{support graph} of $D$ (with respect to $X$) to be the graph that 
has the distinct eigenvalues $\{\theta_1,\ldots,\theta_m\}$ of $A$ as its vertices,
with vertices $\theta_r$ and $\theta_s$  adjacent if $(\theta_r,\theta_s)\in \Phi_D$.  
If $E_rDE_s\ne0$, then  neither $E_rD= 0$ nor $E_sD=0$, 
and so $E_rDE_r\ne 0$ and
$E_sDE_s\ne 0$, i.e., $(\theta_r,\theta_r)\in\Phi_D$ and $(\theta_s,\theta_s)\in\Phi_D$. 
Therefore this graph will normally have loops
(any vertex on an edge is on a loop). 

Let $F$ be the splitting field of the characteristic polynomial of $A$ over $\rats$, 
that is, $F=\rats(\theta_1,\ldots,\theta_m)$. 
Denote the Galois group of $F$ over $\rats$ by $\Ga$. 
For any matrix $M\in \operatorname{Mat}_{n\times n}(F)$ and $\sg\in\Ga$,
let $M^{\sg}$ denote the matrix obtained by applying $\sg$ to each entry of $M$. 
Let $\theta_r$ be any eigenvalue of $A$,
then $\theta_r^\sg$ is an algebraic conjugate of $\theta_r$ 
and is an eigenvalue of $A$. 
In fact, 
\[
AE_r^\sg=(AE_r)^\sg=\theta_r^\sg E_r^\sg.
\]
Therefore, 
$\theta_r^\sg$ is also a vertex of the support graph. 
Now let  $D$  be a rational state, that is, all the entries of $D$ are rational numbers. 
For any two vertices $\theta_r,\theta_s$ of the support graph and any $\sg\in\Ga$, 
$E_r^{\sg}DE_s^{\sg}=(E_rDE_s)^{\sg}$. 
Therefore $E_r^{\sg}DE_s^{\sg}\ne 0$ if and only if $E_rDE_s\ne 0$. 
Hence in the support graph, 
$(\theta_r,\theta_s)$ is an edge if and only if $(\theta_r^\sg,\theta_s^\sg)$ is one; 
$\Ga$ acts as a group of automorphisms
of the support graph of a rational state $D$.
When $D=D_a=e_ae_a^T$, 
the eigenvalue support of $D$ is $\{(\theta_r,\theta_s): E_re_a\ne0 \text{ and }E_se_a\ne0\}$, 
which is usually equivalently defined as $\{\theta_r: E_re_a\ne0\}$. 

Let $D_1$ and $D_2$ be two density matrices of size $n$.  
If $U(t)D_1U(-t)=D_2$ for some $t>0$, 
then we say there is \textsl{perfect state transfer} from $D_1$ to $D_2$ at time $t$.
If $U(t)D_1U(-t)=D_1$ for some $t>0$, 
then we say $D_1$ is \textsl{periodic} at time $t$. 
Note that $D_1$ is periodic at time $t$ if and only if $U(t)$ and $D_1$ commute 
and that fractional revival on $\{a,b\}$ at time $t$ is equivalent to the state $\frac12 (D_a+D_b)$ being periodic at time $t$.  
Since unitarily similar matrices have the same trace, 
sometimes we represent a state with a positive semidefinite matrix instead of normalizing it to a density matrix (scale the matrix to one of trace 1), 
which does not change the analysis.

If $D$ is periodic at time $\tau$, then
\[
	D = U(\tau)DU(-\tau) = \sum_{r,s} e^{i\tau(\theta_r-\theta_s)}E_rDE_s
\]
and if we multiply on the left by $E_r$ and on the right by $E_s$, we get
\[
	E_rDE_s = e^{i\tau(\theta_r-\theta_s)}E_rDE_s.
\]
We conclude that if $(\theta_r,\theta_s)\in\Phi_D$,
then $e^{i\tau\theta_r}=e^{i\tau\theta_s}$.
Therefore if $D$ is periodic at time $\tau$, the function $e^{i\tau z}$ is constant on 
the vertices in a connected component of the support graph. 

For $S\subseteq V(X)$, 
define $D_S$ to be the diagonal matrix with $(a,a)$-entry equal to 1 if $a\in S$ and equal to 0 otherwise. 
Note that $D_S$ (again, we ignore the factor $\frac{1}{|S|}$) is a rational state. 
If $D_S$ is periodic (sometimes we say $S$ is periodic for simplicity), 
then its eigenvalue support has a special form:

\begin{theorem}\cite{godsil2017real}\label{thm:ratstran}
Let $X$ be a simple graph and 
$D$ be a rational state.  
If $D$ is periodic relative to the continuous quantum walk on $X$ at time $\tau$, 
then there is a square-free integer $\De$ such that 
$\frac{\theta_r-\theta_s}{\sqrt{\De}}\in\ints$ for all $(\theta_r,\theta_s)$ in the support of $D$.
\end{theorem}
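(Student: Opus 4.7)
The starting point has already been supplied just before the statement: periodicity at time $\tau$ implies that $e^{i\tau\theta_r}=e^{i\tau\theta_s}$ whenever $(\theta_r,\theta_s)\in\Phi_D$, so every ``support difference'' $\alpha_{r,s}:=\theta_r-\theta_s$ satisfies $\tau\alpha_{r,s}\in 2\pi\ints$. My plan is to use this together with the Galois action of $\Ga=\mathrm{Gal}(F/\rats)$ on the support graph (noted in the excerpt, since $D$ is rational) to show that every nonzero support difference is a rational integer multiple of one fixed element $\sqrt{\Delta}$, for a squarefree $\Delta$.

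First I would dispose of the trivial case where every support difference is zero (take $\Delta=1$). Otherwise, pick a fixed nonzero support difference $\alpha_0:=\theta_{r_0}-\theta_{s_0}$. Because $\tau\alpha_0$ and $\tau\alpha_{r,s}$ are both nonzero integer multiples of $2\pi$ (nonzero for $\alpha_{r,s}\ne 0$), their ratio is rational. Hence every nonzero support difference lies in $\rats\,\alpha_0$. So the problem reduces to understanding $\alpha_0$.

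Next I would exploit the Galois action. Since $D$ is rational, $\Ga$ permutes $\Phi_D$, so for every $\sg\in\Ga$ the conjugate $\alpha_0^\sg=\theta_{r_0}^\sg-\theta_{s_0}^\sg$ is itself a support difference, and by the previous step $\alpha_0^\sg\in\rats\,\alpha_0$. The map $\sg\mapsto\alpha_0^\sg/\alpha_0$ is a homomorphism from the finite group $\Ga$ to $\rats^\times$, so its image lies in the only finite subgroup $\{\pm 1\}$. Consequently $\alpha_0^2$ is $\Ga$-fixed and therefore in $\rats$. Since $\alpha_0$ is a difference of algebraic integers, $\alpha_0^2$ is a nonnegative rational algebraic integer, hence a positive integer; writing $\alpha_0^2=k^2\Delta$ with $\Delta$ squarefree yields $\alpha_0=\pm k\sqrt{\Delta}$, so $\alpha_0/\sqrt{\Delta}\in\ints$.

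Finally I would upgrade this to every support difference. Any nonzero $\alpha_{r,s}$ equals $c\alpha_0=q\sqrt{\Delta}$ for some $q\in\rats$, and it is an algebraic integer whose square $q^2\Delta$ is rational, hence an integer. The main technical point — and the place I expect to have to be careful — is the final step: deducing $q\in\ints$ from $q\in\rats$ and $q^2\Delta\in\ints$. Writing $q=a/b$ in lowest terms gives $b^2\mid a^2\Delta$, whence $b^2\mid\Delta$, and squarefreeness of $\Delta$ forces $b=1$. This shows $\alpha_{r,s}/\sqrt{\Delta}\in\ints$ for every $(\theta_r,\theta_s)\in\Phi_D$, completing the proof.
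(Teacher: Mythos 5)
Your argument is correct. The paper itself does not prove this theorem (it is imported from the cited reference), but your proof is assembled from exactly the two ingredients the paper records around the statement: the phase condition $e^{i\tau\theta_r}=e^{i\tau\theta_s}$ on the support of a periodic state, and the fact that the Galois group of the splitting field acts on the support graph of a rational state. The ratio argument (all nonzero support differences are rational multiples of one another), the observation that a homomorphism from the finite Galois group into $\rats^{\times}$ has image in $\{\pm1\}$ so that $\alpha_0^2$ is a rational algebraic integer, and the final lowest-terms step forcing $b=1$ from $b^2\mid\Delta$ are all sound; this is essentially the standard argument from the cited source.
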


Let $D$ be a periodic rational state. 
The above Theorem tells us that there exists a square-free integer $\De$ such that for any two vertices $\theta_r,\theta_s$ in the same connected component of the support graph of $D$, 
$\theta_r-\theta_s$ is an integer multiple of $\sqrt{\De}$. 
In particular, 
when $D=D_a+D_b$ is periodic, 
we have a necessary condition on $\Phi_D$ for fractional revival on $\{a,b\}$ to occur, 
as shown in Theorem~\ref{thm:FR}.

\subsection{Induced algebra}

Now assume 
without loss of generality that $S= \{1,2,\ldots,k\}$ is periodic at time $\tau$: 
$U(\tau)D_S=D_SU(\tau)$. 
Then $U(\tau)$ has form
\[
	\pmat{U_0&0\\ 0&U_1},
\]
where $U_0$ is $k \times k$ and 
\[
	D_SU(\tau)D_S = \pmat{U_0&0\\0&0}.
\]
Now
\begin{equation}
	\label{DUDeq:}
	(D_SU(\tau)D_S)(D_SE_rD_S) = D_SU(\tau)E_rD_S = e^{i\tau\theta_r}D_SE_rD_S
\end{equation}
and similarly $(D_SE_rD_S)(D_SU(\tau)D_S)=e^{i\tau\theta_r}D_SE_rD_S$;  
it follows that $D_SU(\tau)D_S$ and $D_SE_rD_S$ commute.

As before, 
let $X$ be a connected graph on $n$ vertices with adjacency matrix $A=\sum_{r=1}^m\theta_r E_r$. 
Let $M$ be a matrix indexed by $V(X)$. 
For a subset $S$ of $V(X)$, 
denote the submatrix of $M$ with rows and columns indexed by elements of $S$ by $M_{S,S}$. 
The \textsl{induced adjacency algebra} of $X$ on $S$ \cite{thebook} is the matrix algebra generated by the matrices 
\[
(E_r)_{S,S},\quad r=1,\ldots, m.
\]
Denote it by $\cA(S)$ and note that it is isomorphic to the algebra generated by the $n\times n$ matrices $D_SE_rD_S, \; r=1,\ldots, m$, 
and \eqref{DUDeq:} is equivalent to 
\begin{equation}\label{eq:indueige}
U_0 (E_r)_{S,S}=e^{i\tau\theta_r}(E_r)_{S,S}. 
\end{equation}
Therefore \eqref{eq:indueige} implies the following: 
\begin{lemma}\label{lem:pericenter}
If the subset $S$ of $V(X)$ is periodic relative to the continuous quantum walk on $X$ at time $\tau$, 
then $U(\tau)_{S,S}$ belongs to the center of $\cA(S)$, the induced algebra generated by $(E_r)_{S,S}$. \qed
\end{lemma}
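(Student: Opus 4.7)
The plan is to unpack the identity \eqref{eq:indueige} established in the discussion immediately preceding the lemma. Two things must be verified: that $U(\tau)_{S,S}$ actually lies in $\cA(S)$, and that it commutes with every element of $\cA(S)$.

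For membership, I would write $U(\tau)=\sum_{r=1}^m e^{i\tau\th_r}E_r$ and take the principal $S\times S$ submatrix of both sides to obtain
\[
U(\tau)_{S,S}=\sum_{r=1}^m e^{i\tau\th_r}(E_r)_{S,S},
\]
which is a linear combination of the generators of $\cA(S)$ and therefore belongs to $\cA(S)$.

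For centrality, set $U_0=U(\tau)_{S,S}$. Equation \eqref{eq:indueige} already records $U_0(E_r)_{S,S}=e^{i\tau\th_r}(E_r)_{S,S}$; repeating the derivation of \eqref{DUDeq:} with the factors multiplied in the opposite order (using the companion identity $(D_SE_rD_S)(D_SU(\tau)D_S)=e^{i\tau\th_r}D_SE_rD_S$ noted just before the lemma) yields $(E_r)_{S,S}U_0=e^{i\tau\th_r}(E_r)_{S,S}$. Hence $U_0$ commutes with every generator $(E_r)_{S,S}$ of $\cA(S)$, and so with every element of $\cA(S)$. Combined with the membership observed above, this places $U_0$ in the center of $\cA(S)$. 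I do not foresee a genuine obstacle: all of the computational work is already present in the paragraphs preceding the statement, and the lemma essentially packages it as a clean conclusion that will be convenient to cite later.
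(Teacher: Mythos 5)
Your proposal is correct and follows essentially the same route as the paper, which derives the lemma directly from the two-sided commutation identities surrounding equation \eqref{DUDeq:}. The only addition is your explicit check that $U(\tau)_{S,S}=\sum_r e^{i\tau\th_r}(E_r)_{S,S}$ lies in $\cA(S)$, a point the paper leaves implicit but which is a reasonable thing to record.
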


If $M$ is a matrix with all eigenvalues being simple, 
then any matrix that commutes with $M$ is a polynomial in $M$. 

\begin{corollary}\label{coro:commalg}
Assume $S$ is a subset of $V(X)$ that is periodic relative to the continuous quantum walk on $X$ at time $\tau$. 
If the eigenvalues of  $U(\tau)_{S,S}$ are distinct, 
then the induced algebra $\cA(S)$ is commutative. \qed
\end{corollary}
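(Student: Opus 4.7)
The plan is to combine Lemma~\ref{lem:pericenter} with the general fact stated immediately before the corollary: a matrix with simple spectrum has commutant equal to the algebra of polynomials in itself. The argument should be essentially one short paragraph.

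First I would set $U_0 := U(\tau)_{S,S}$. By Lemma~\ref{lem:pericenter}, $U_0$ lies in the center of $\cA(S)$; equivalently, every $M\in\cA(S)$ satisfies $MU_0 = U_0 M$. Next, invoking the cited fact with the hypothesis that the eigenvalues of $U_0$ are all distinct, any matrix commuting with $U_0$ is a polynomial in $U_0$. Hence every element of $\cA(S)$ is a polynomial in $U_0$, so $\cA(S)$ is contained in $\rats[U_0]$ (or $\cx[U_0]$), which is commutative as any two polynomials in the same matrix commute. Therefore $\cA(S)$ itself is commutative.

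There is really no main obstacle: the work was already done in establishing the centrality statement \eqref{eq:indueige} and recording Lemma~\ref{lem:pericenter}. The only thing to check is that the simple-spectrum remark applies to $U_0$ as a matrix over $\cx$ — which is immediate since $U_0$ is a principal submatrix of a unitary, hence diagonalizable, and the hypothesis of distinct eigenvalues is exactly what the corollary assumes. One sentence of bookkeeping suffices to close the argument.
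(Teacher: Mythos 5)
Your argument is correct and is exactly the one the paper intends: Lemma~\ref{lem:pericenter} puts $U(\tau)_{S,S}$ in the center of $\cA(S)$, and the simple-spectrum remark forces every element of $\cA(S)$ to be a polynomial in $U(\tau)_{S,S}$, hence the algebra is commutative. One tiny quibble: a principal submatrix of a unitary matrix need not be diagonalizable in general (here $U(\tau)_{S,S}$ is itself unitary because $U(\tau)$ is block diagonal, and in any case distinct eigenvalues already imply diagonalizability), but this does not affect the proof.
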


Now consider the case when $D_S$ is periodic for $S=\{1,2\}$. 
If $U_0$ is a diagonal but not a scalar matrix,
then the eigenvalues of $U_0$ are distinct and 
therefore $(E_r)_{S,S}$ is diagonal.
It follows that $(A^k)_{1,2}=\sum_r \theta_r^k(E_r)_{1,2}=0$ for all positive integer $k$, 
and therefore $X$ is not connected, a contradiction. 
Hence if  non-proper fractional revival occurs on $\{1,2\}$ at time $\tau$, 
then $U_0$ is a scalar matrix and so we have simultaneous 
periodicity at vertices 1 and 2 with the same phase factor. 
When proper fractional revival occurs, 
the eigenvalues of $U_0$ are distinct (the only $2\times 2$ diagonalizable matrix with repeated eigenvalues is a scalar matrix), 
and we obtain another necessary condition for fractional revival.

\begin{lemma}\cite{FFR}
		If proper fractional revival occurs on $S=\{a,b\}$, 
		then the induced algebra $\cA(S)$ is commutative.\qed
\end{lemma}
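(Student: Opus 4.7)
The plan is to reduce the statement directly to Corollary~\ref{coro:commalg}, which already asserts that $\cA(S)$ is commutative as soon as the $2\times 2$ block $U(\tau)_{S,S}$ has two distinct eigenvalues. So the whole task is to verify this distinctness under the hypothesis of proper fractional revival.

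First I would invoke \lref{pericenter}: fractional revival on $\{a,b\}$ at time $\tau$ is equivalent to $D_S$ being periodic at $\tau$, so (after relabelling $a,b$ as $1,2$) the matrix $U(\tau)$ is block-diagonal with upper-left $2\times 2$ block $U_0 := U(\tau)_{S,S}$. Since $U(\tau)$ is unitary, $U_0$ is a unitary $2\times 2$ matrix and is therefore diagonalizable over $\cx$. Next I would rule out $U_0$ having a repeated eigenvalue: any diagonalizable $2\times 2$ matrix with a single eigenvalue is a scalar multiple of the identity, so if the eigenvalues coincided we would have $U_0 = \la I_2$ for some $\la$. But then $U(\tau)e_a = \la e_a$ would force $U(\tau)D_aU(-\tau) = D_a$, equivalently $U(\tau)_{a,b}=0$, contradicting the definition of proper fractional revival. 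Hence $U_0$ has two distinct eigenvalues, and Corollary~\ref{coro:commalg} yields commutativity of $\cA(S)$.

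The argument has no real obstacle beyond a small piece of bookkeeping: translating the characterisation of ``proper'' fractional revival ($U(\tau)_{a,b}\ne 0$, equivalently $U(\tau)D_aU(-\tau)\ne D_a$) into the statement that $U_0$ cannot be a scalar matrix. All the substantive work has already been done by \lref{pericenter} and Corollary~\ref{coro:commalg}; in effect the lemma just packages the final case of the preceding paragraph.
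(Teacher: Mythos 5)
Your proposal is correct and follows essentially the same route as the paper: the paper likewise observes that a unitary (hence diagonalizable) $2\times 2$ block with a repeated eigenvalue must be scalar, which would force $U(\tau)_{a,b}=0$ and contradict properness, so $U_0$ has distinct eigenvalues and Corollary~\ref{coro:commalg} applies. No gaps.
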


\smallbreak\noindent

As mentioned, prior to this work, 
in all examples of fractional revival the two vertices involved, say $a$ and $b$,
were cospectral, 
a condition stronger than requiring the induced adjacency algebra $\cA(\{a,b\})$ to be commutative. 
Similarly to cospectrality of vertices $a,b$ \cite{strongcos}, 
 commutativity of the induced algebra $\cA(\{a,b\})$   has some equivalent combinatorial descriptions. 

\begin{theorem}\cite{FFR} \label{thm:commuindal}
Let $X$ be a connected graph and let $A=\sum_{r}\theta_r E_r$ be the spectral 
decomposition of $A$. 
For vertices $a,b$ in $X$, 
the following  are equivalent:
\begin{enumerate}[(a)]
	\item 
	The induced algebra $\cA(\{a,b\})$ is commutative, 
	\item
	there exists some $\ga$ such that 
	$(E_r)_{a,a}-(E_r)_{b,b} = \ga (E_r)_{a,b}$ for all $r$, 
	\item
	There exists some $\ga$ such that 
	$(A^k)_{a,a}-(A^k)_{b,b} =  \ga  (A^k)_{a,b}$ for all positive integer $k$, 
	\item
	For some $\ga$
	\[
		\phi(X\diff a,t)-\phi(X\diff b,t)
			=\ga\sqrt{\phi(X\diff a,t)\phi(X\diff b,t)-\phi(X,t)\phi(X\diff\{a,b\},t)}, 
	\]
\end{enumerate}
\end{theorem}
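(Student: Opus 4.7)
My plan is to prove the chain (a) $\Leftrightarrow$ (b) $\Leftrightarrow$ (c) $\Leftrightarrow$ (d) in sequence, using (in turn) the classification of commuting $2\times 2$ symmetric matrices, Vandermonde invertibility, and the classical cofactor/Jacobi identities applied to $(tI-A)^{-1}$.

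For (a) $\Leftrightarrow$ (b), note that $\cA(\{a,b\})$ is generated by the real symmetric $2\times 2$ matrices $M_r := (E_r)_{\{a,b\},\{a,b\}}$, so commutativity of $\cA(\{a,b\})$ is equivalent to the $M_r$ pairwise commuting. Decompose each $M_r$ into its scalar and traceless parts and parametrize the latter by the pair
\[
(p_r, q_r) = \bigl(\tfrac12((E_r)_{a,a}-(E_r)_{b,b}),\,(E_r)_{a,b}\bigr).
\]
A direct computation shows that $[M_r, M_s]$ is $(p_r q_s - p_s q_r)$ times a fixed nonzero antisymmetric matrix, so all $M_r$ commute iff the vectors $(p_r, q_r)$ are pairwise parallel. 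Since $X$ is connected and $a\ne b$, at least one $(E_r)_{a,b}$ is nonzero (otherwise $(A^k)_{a,b}=0$ for all $k$, contradicting connectivity), so pairwise parallelism is equivalent to the existence of a single $\gamma$ with $(E_r)_{a,a}-(E_r)_{b,b} = \gamma(E_r)_{a,b}$ for every $r$.

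For (b) $\Leftrightarrow$ (c), the forward direction is immediate from $A^k = \sum_r \th_r^k E_r$. For the converse, put $x_r := (E_r)_{a,a}-(E_r)_{b,b}-\gamma(E_r)_{a,b}$; condition (c), together with the trivially valid case $k=0$ (which reduces to $0=0$ because $a\ne b$), gives $\sum_r \th_r^k x_r = 0$ for $k = 0,\ldots,m-1$. Invertibility of the Vandermonde matrix in the distinct $\th_r$ then forces each $x_r = 0$.

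For (c) $\Leftrightarrow$ (d), I would pass to the resolvent $M(t) := (tI-A)^{-1} = \sum_{k\ge 0} A^k t^{-k-1}$: matching coefficients of $t^{-k-1}$ shows (c) is equivalent to the rational-function identity $M(t)_{a,a}-M(t)_{b,b} = \gamma M(t)_{a,b}$. The standard cofactor expression gives $M(t)_{a,a} = \phi(X\diff a,t)/\phi(X,t)$, and the Desnanot--Jacobi identity applied to $tI-A$ together with its symmetry yields
\[
M(t)_{a,b}^2 = \frac{\phi(X\diff a,t)\phi(X\diff b,t)-\phi(X,t)\phi(X\diff\{a,b\},t)}{\phi(X,t)^2}.
\]
Multiplying the identity for $M(t)_{a,a}-M(t)_{b,b}$ through by $\phi(X,t)$ and substituting the square root from the displayed identity yields precisely (d); the sign ambiguity of the square root is absorbed into the sign of $\gamma$. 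The main obstacle is exactly this last step: picking up the Jacobi identity in a form that reproduces the square-root expression in (d) and tracking the sign convention carefully. Everything else reduces to transparent linear algebra.
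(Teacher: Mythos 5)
The paper does not prove this theorem; it is quoted verbatim from the cited reference \cite{FFR}, so there is no in-paper argument to compare against. Judged on its own, your proof is correct and complete: the decomposition of each $(E_r)_{\{a,b\},\{a,b\}}$ into scalar plus traceless parts does give $[M_r,M_s]=2(p_rq_s-p_sq_r)\left[\begin{smallmatrix}0&1\\-1&0\end{smallmatrix}\right]$, so commutativity is exactly pairwise parallelism of the vectors $(p_r,q_r)$, and your appeal to connectivity to guarantee some $(E_r)_{a,b}\neq 0$ is precisely the point needed to extract a single $\gamma$ (and to handle the degenerate $q_r=0$ components correctly). The Vandermonde step, including the observation that $k=0$ holds automatically since $a\neq b$ and $\sum_r E_r=I$, is sound, and the resolvent computation via Jacobi's identity on complementary minors of $tI-A$ reproduces exactly the quantity $\psi_{a,b}(X,t)^2$ under the square root in (d); since that quantity is the square of the polynomial $\phi(X,t)\,(tI-A)^{-1}_{a,b}$, the sign ambiguity is genuinely absorbable into $\gamma$ in both directions. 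This is essentially the standard argument (and the same strategy as in \cite{FFR}, which works through walk generating functions, i.e.\ the entries of the resolvent); I see no gap.
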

\noindent
where for $S\subseteq V(X)$, $X\diff S$ denotes the induced subgraph of $X$ on $S$ and
$\phi(X\diff S)$ denotes the characteristic polynomial of the adjacency matrix $A(X\diff S)$. 
The value of $\ga$ is the same in each case, and is rational. We see that
$\ga=0$ if and only if $a$ and $b$ are cospectral. 
If any of the above condition holds, 
vertices $a$ and $b$ are said to be \textsl{fractionally cospectral} \cite{FFR}. 

We use $\psi_{a,b}(X,t)$ to denote 
$
	\sqrt{\phi(X\diff a,t)\phi(X\diff b,t)-\phi(X,t)\phi(X\diff\{a,b\},t)}
$
(even if $\cA(\{a,b\})$ is not commutative).

Theorem~\ref{thm:commuindal} implies that if $a$ and $b$ satisfy certain relations or if $X$ is of certain type, 
then the commutativity of the induced algebra $\cA(\{a,b\})$  implies that $a$ and $b$ are cospectral. 
\begin{corollary}\cite{FFR}
Let $X$ be a simple graph and $a,b\in V(X)$. 
Assume the induced adjacency algebra $\cA(\{a,b\})$ is commutative.
If one of the following conditions holds, then $a$ and $b$ are cospectral: 
\begin{itemize}
\item
$a$ is adjacent to $b$
\item
$X$ is bipartite, and the distance between $a$ and $b$ is odd
\item
$a$ and $b$ are of the same degree and are at distance two in $X$
\item
$X$ is a connected regular graph
\end{itemize}
\end{corollary}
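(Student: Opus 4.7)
The plan is to use Theorem~\ref{thm:commuindal}, which gives the constant $\ga\in\rats$ satisfying $(A^k)_{a,a}-(A^k)_{b,b}=\ga(A^k)_{a,b}$ for every positive integer $k$, and for which $\ga=0$ is equivalent to cospectrality of $a$ and $b$. So my entire task reduces to showing that, under each of the four hypotheses, we have $\ga=0$. In each case I would pick one particular index $k$ (or, in the last case, one particular spectral idempotent) for which the right-hand side is forced to be a nonzero multiple of $\ga$ while the left-hand side vanishes.

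For the first bullet, I would take $k=1$: since $X$ is simple, $(A)_{a,a}=(A)_{b,b}=0$ while $(A)_{a,b}=1$, so the identity becomes $0=\ga$. For the second bullet, let $X$ be bipartite with parts $V_1,V_2$; since $\dist(a,b)$ is odd, $a$ and $b$ lie in opposite parts, hence there are no closed walks of odd length at either vertex and $(A^k)_{a,a}=(A^k)_{b,b}=0$ for every odd $k$. Choosing $k=\dist(a,b)$ (odd) guarantees $(A^k)_{a,b}\ge 1$, so again $\ga=0$. For the third bullet, take $k=2$: $(A^2)_{a,a}=\deg(a)=\deg(b)=(A^2)_{b,b}$ and $(A^2)_{a,b}$ is the number of common neighbors, which is $\ge 1$ because $\dist(a,b)=2$; hence $\ga=0$.

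For the fourth bullet I would use form (b) of Theorem~\ref{thm:commuindal} instead of (c). If $X$ is connected and regular then $\mathbf{1}$ is an eigenvector of $A$, so the spectral idempotent onto its eigenspace is $E_0=\tfrac{1}{n}J$. Hence $(E_0)_{a,a}=(E_0)_{b,b}=(E_0)_{a,b}=\tfrac{1}{n}$, and the equation $(E_0)_{a,a}-(E_0)_{b,b}=\ga(E_0)_{a,b}$ collapses to $0=\ga/n$, giving $\ga=0$.

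I do not expect any genuine obstacle here; each bullet is a one-line consequence of the corresponding equivalent condition in Theorem~\ref{thm:commuindal} together with an elementary graph-theoretic observation. The only mildly subtle point is recognizing that, for the regular case, condition (b) with $r=0$ is more convenient than condition (c), because picking a specific power $k$ for which $(A^k)_{a,b}\ne 0$ while $(A^k)_{a,a}=(A^k)_{b,b}$ is not obviously available for an arbitrary connected regular graph.
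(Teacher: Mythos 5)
Your proof is correct and follows exactly the route the paper intends: the paper states this result as an immediate corollary of Theorem~\ref{thm:commuindal}, citing \cite{FFR} without supplying details, and the intended argument is precisely your case-by-case verification that $\ga=0$. Your choices --- condition (c) with $k=1$, $k=\dist(a,b)$, and $k=2$ for the first three bullets, and condition (b) with the Perron idempotent $\frac{1}{n}J$ for the connected regular case --- all check out.
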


Now we derive one more necessary condition for proper fractional revival to occur. 
Recall that when proper fractional revival occurs on $S=\{a,b\}$ at time $\tau$, 
the two eigenvalues of $U_0$ are both simple. 
Equation \eqref{eq:indueige} implies that for each $r$, the non-zero columns
of $(E_r)_{S,S}$ are eigenvectors of $U(\tau)_{S,S}$ associated to eigenvalue $e^{i\tau\theta_r}$, 
therefore $(E_r)_{S,S}$ has rank at most 1 
and vertices $a$ and $b$ are parallel. 
In fact, 
from $\det\left((E_r)_{S,S}\right)=(E_r)_{a,a}(E_r)_{b,b}-(E_r)_{a,b}(E_r)_{b,a}$ and 
$(E_r)_{a,b}=\langle E_re_a, E_re_b \rangle$ 
(where $\langle,\rangle$ denotes the inner product of vectors), 
we know that $\det(E_r)_{S,S}=0$ if and only if 
\[
\langle E_re_a, E_re_a \rangle \langle E_re_b, E_re_b \rangle=|\langle E_re_a, E_re_b \rangle|^2, 
\]
which holds if and only if $E_re_a$ and $E_re_b$ are parallel. 
Since it holds for all $r$, vertices $a$ and $b$ are parallel. 

We have obtained three necessary conditions for proper fractional revival to occur: 
$\theta_r-\theta_s=m_{r,s}\sqrt{\De}$ for $(\theta_r,\theta_s)\in \Phi_{D_S}$ (Theorem~\ref{thm:ratstran}), 
commutativity of $\cA(S)$ (Corollary~\ref{coro:commalg}), and parallelity of vertices $a$ and $b$. 
To get a deeper description of the eigenvalue support of $D_S$, 
we explore the above information further. 
Assume that $\la_1$ and $\la_2$ are the two distinct eigenvalues of $U_0$, 
 $x$ is a unit eigenvector associated to eigenvalue $\la_1$, and that $y$ is a unit eigenvector associated to $\la_2$. 
Then for each $r$, either $(E_r)_{S,S}=\al_r xx^*$ if $e^{i \tau \theta_r}=\la_1$ or $(E_r)_{S,S}=\al_r yy^*$ if $e^{i \tau \theta_r}=\la_2$, for some nonnegative number $\al_r$,  
as $(E_r)_{S,S}$ is positive semidefinite of rank at most 1. 
Since  $(E_r)_{S,S}$ is a real matrix, 
the two eigenvectors $x$ and $y$ can be chosen to be real. 
Therefore $\langle x, y \rangle =0$, 
as real eigenvectors associated to distinct eigenvalues of a complex symmetric matrix are orthogonal.
Therefore for any two eigenvalues $\theta_r$ and $\theta_s$ of $A$, 
either $e^{i\tau \theta_r}=e^{i \tau \theta_s}$ and $(E_r)_{S,S}$ and $(E_s)_{S,S}$ are scalar multiple of each other, 
or $e^{i\tau \theta_r}\ne e^{i \tau \theta_s}$ and $(E_r)_{S,S}(E_s)_{S,S}=0$. 
Together with the fact $(\theta_r,\theta_s)\in \Phi_{D_S}$ if and only if $(E_r)_{S,S}(E_s)_{S,S}\neq 0$, this implies:  

\begin{lemma}\label{lem:profrsuppg}
If proper fractional revival occurs on $\{a,b\}$ in a graph $X$, 
then the support graph of $D_{\{a,b\}}$ has two components isomorphic to complete graphs
with loops, and the remaining components are isolated vertices (loopless).
\end{lemma}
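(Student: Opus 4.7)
The plan is to leverage the structural setup established in the paragraph immediately preceding the lemma. There one obtains that, for each eigenvalue $\theta_r$ of $A$, the $2\times 2$ block $(E_r)_{S,S}$ is a non-negative scalar multiple of either $xx^T$ or $yy^T$, where $x$ and $y$ are real orthogonal unit vectors corresponding to the two (necessarily distinct) eigenvalues $\la_1,\la_2$ of $U_0 = U(\tau)_{S,S}$. So I would partition the vertex set of the support graph into three pieces: $R_1 = \{r : (E_r)_{S,S} = \al_r xx^T,\ \al_r > 0\}$, $R_2 = \{r : (E_r)_{S,S} = \al_r yy^T,\ \al_r > 0\}$, and $R_0 = \{r : (E_r)_{S,S} = 0\}$, and then identify the components of the support graph with these sets.

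Next, I would verify the claimed adjacency structure by computing the block products $(E_r)_{S,S}(E_s)_{S,S}$ in each case. For $r,s\in R_1$, the product equals $\al_r\al_s xx^T$, which is nonzero since $x$ is a unit vector; similarly for $r,s\in R_2$ with $yy^T$. For $r\in R_1$, $s\in R_2$, the orthogonality $\ip{x}{y}=0$ yields $xx^T yy^T = 0$. For any pair involving $R_0$, the product is trivially zero. Combined with the previously noted equivalence $(\th_r,\th_s)\in \Phi_{D_S}$ iff $(E_r)_{S,S}(E_s)_{S,S}\ne 0$, this shows that $R_1$ spans a complete graph with all loops, $R_2$ likewise, there are no edges between $R_1$ and $R_2$, and every vertex of $R_0$ is isolated and loopless.

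Finally, I would confirm that $R_1$ and $R_2$ are both nonempty, so that two complete-with-loops components actually appear. This follows from the resolution of identity $\sum_r (E_r)_{S,S} = I$ on the $S$-block together with the decomposition $xx^T + yy^T = I$ in the orthonormal basis $\{x,y\}$, forcing $\sum_{r\in R_1}\al_r = \sum_{r\in R_2}\al_r = 1$, so neither set can be empty.

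There is no real obstacle here, since the analytic content, specifically the rank-at-most-one block structure of $(E_r)_{S,S}$ and the orthogonality of the eigenvectors $x$ and $y$, has already been extracted in the preceding paragraph; the lemma is essentially a translation of that structure into the combinatorics of the support graph. The only small book-keeping point is the nonemptyness of $R_1$ and $R_2$, which is handled as above.
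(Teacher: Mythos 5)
Your proposal is correct and follows essentially the same route as the paper: the paper's proof is precisely the paragraph preceding the lemma, which establishes that each $(E_r)_{S,S}$ is a nonnegative multiple of $xx^T$ or $yy^T$ with $\ip{x}{y}=0$ and then reads off the component structure from the products $(E_r)_{S,S}(E_s)_{S,S}$ via the equivalence $(\th_r,\th_s)\in\Phi_{D_S}$ iff $(E_r)_{S,S}(E_s)_{S,S}\ne0$. Your explicit check that $R_1$ and $R_2$ are nonempty (via $\sum_r(E_r)_{S,S}=I=xx^T+yy^T$) is a small point the paper only makes implicitly, in the discussion following the lemma where it derives $\sum_{r\in C^+}\al_r=\sum_{r\in C^-}\al_r=1$.
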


In fact, 
the two nontrivial components of the support graph can be determined by the sign of $(E_r)_{a,b}$.  
Note that neither $x$ nor $y$ has zero entries. 
For otherwise $x^Ty=0$ implies that $\{x,y\}= \{[1 \; 0]^T, [0 \; 1]^T\}$, 
contradicting proper fractional revival occurs on $\{a,b\}$. 
Therefore either $x$ or $y$ has its two components both positive or both negative. 
Without loss of generality, 
assume $x = \begin{bmatrix}p \\q\end{bmatrix}$ with $pq > 0$ and $p^2+q^2=1$, 
 then  
$y =\pm \begin{bmatrix}-q \\p\end{bmatrix}.$ 
Therefore for some non-negative integer $\al_r$, 
\[
(E_r)_{S,S}=\al_{r}\begin{bmatrix}p^2 & pq\\ pq & q^2\end{bmatrix},\; \text{ or }
(E_r)_{S,S}=\al_{r}\begin{bmatrix}q^2 & -pq\\ -pq & p^2\end{bmatrix}, 
\] 
depending on whether $e^{i\tau \theta_r}=\la_1$
or $e^{i\tau \theta_r}=\la_2$. 
Furthermore, 
the scalar $\ga$ in Theorem~\ref{thm:commuindal} satisfies 
$\ga=\frac pq-\frac qp$.

Let $C^+$ be the set of indices $r$ such that $(E_r)_{a,b}>0$, 
and let $C^-$ be the set of indices $r$ such that $(E_r)_{a,b}<0$. 
Then $C^+$ and $C^-$ correspond to the two nontrivial components of the support graph of  $D_{\{a,b\}}$. Furthermore, 
\[
	U_0 = \sum_{r\in C^+} \al_r e^{it\theta_r} xx^T 
			+ \sum_{r\in C^-} \al_r e^{it\theta_r} yy^T
\]
implies that 	$\sum_{r\in C^+} \al_r = \sum_{r\in C^-} \al_r = 1$, 
as the eigenvalues of a unitary matrix are numbers on the unit circle. 
Therefore 
\begin{equation}\label{eq:profrspec}
 U_0=e^{i\tau \theta_r}\begin{bmatrix}p^2 & pq\\pq & q^2\end{bmatrix}
 +e^{i\tau \theta_s}\begin{bmatrix}q^2 & -pq\\-pq & p^2\end{bmatrix}.
\end{equation}
These three conditions ($\theta_r-\theta_s=m_{r,s}\sqrt{\De}$ for $(\theta_r,\theta_s)\in \Phi_{D_S}$, 
commutativity of $\cA(S)$, and parallelity of vertices $a$ and $b$) 
  plus one more condition (that ensures the fractional revival is proper, that is, 
  $e^{i\tau \theta_r}\ne e^{i\tau \theta_s}$) in fact form a characterization of when proper fractional revival occurs on a graph.

\begin{theorem}\cite{FFR} \label{thm:FR}
Let $X$ be a connected graph with adjacency matrix $A=\sum_r \theta_r E_r$. Proper fractional revival occurs on $\{a,b\}$ in $X$ if and only if the following conditions hold.
\begin{enumerate}[(a)]
\item
$a$ and $b$ are parallel
\item
The induced algebra on $\{a,b\}$ is commutative
\item 
Let $C^+$ be the set of indices $r$ such that $(E_r)_{a,b}>0$, 
and let $C^-$ be the set of indices $r$ such that $(E_r)_{a,b}<0$. 
Then there exist a square-free integer $\De$ such that 
\begin{align*}
&\theta_j=\rho_j\sqrt{\De}, \text{ for } j\in C^+, \text{ and}\\
&\theta_{\ell}=\om_{\ell}\sqrt{\De}, \text{ for } \ell\in C^-,
\end{align*}
where $\rho_j$'s and $\om_{\ell}$'s are real numbers satisfying 
\[
 \rho_j-\rho_{j'} \in \mathbb{Z} \text{ and } \om_{\ell}-\om_{\ell'}\in\mathbb{Z}\]
for all $j,j'\in C^+$ and $\ell, \ell'\in C^-$.
\item
Let $g=\gcd\left\{\frac{\theta_r-\theta_s}{\sqrt{\De}}\; : \, r,s\in C^+, \text{ or } r,s\in C^-\right\}$. 
There exists $j\in C^+$ and $\ell \in C^-$ such that 
\[
\frac{\rho_j-\om_{\ell}}{g}\notin \mathbb{Z}.
\]
\end{enumerate}
Moreover, if these conditions hold, then $\frac{2\pi}{g\sqrt{\De}}$ is the minimum time for proper fractional revival to occur on $\{a,b\}$ in $X$. 
Note that  fractional revival on $\{a,b\}$ occurs if and only if conditions $(a),(b)$ and $(c)$ hold.
\qed
\end{theorem}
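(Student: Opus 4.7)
My plan is to tackle the two directions separately; most of the conceptual content has been established in the preceding discussion, so both halves come down to careful bookkeeping.

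For necessity, assume proper fractional revival occurs at some time $\tau_0$. Condition (a) (parallelity) is the rank-one argument from the paragraph preceding \lref{profrsuppg}, and (b) is the cited lemma of \cite{FFR}. For (c), I would use \lref{profrsuppg} to identify $C^+$ and $C^-$ as the two nontrivial components of the support graph of $D_{\{a,b\}}$, apply \tref{ratstran} to obtain the square-free $\Delta$, and set $\rho_j := \theta_j/\sqrt{\Delta}$, $\omega_\ell := \theta_\ell/\sqrt{\Delta}$. For (d) I would argue by contradiction: periodicity of $D_{\{a,b\}}$ forces $\tau_0$ to be a positive integer multiple of $2\pi/(g\sqrt{\Delta})$ (since $g$ generates the subgroup of $\ints$ spanned by the within-component differences $\rho_j-\rho_{j'}$ and $\omega_\ell-\omega_{\ell'}$), and if $(\rho_j-\omega_\ell)/g \in \ints$ for \emph{every} $j \in C^+$ and $\ell \in C^-$, then $\tau_0(\theta_j-\theta_\ell) \in 2\pi\ints$ throughout, making $U(\tau_0)_{S,S}$ a scalar matrix and contradicting properness.

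For sufficiency, assume (a)--(d) and set $\tau := 2\pi/(g\sqrt{\Delta})$. Using (a) and (b), I would pick $p,q$ with $pq>0$, $p^2+q^2=1$, and $\gamma = p/q-q/p$, where $\gamma$ is the constant from \tref{commuindal}. A short sign-tracking computation on the entries of $(E_r)_{S,S}$ then shows $E_re_b = (q/p)E_re_a$ for $r \in C^+$, $E_re_b = -(p/q)E_re_a$ for $r \in C^-$, and $E_re_a = E_re_b = 0$ for the remaining $r$ (the last case because $(E_r)_{a,a} = \|E_re_a\|^2$ and (b) forces $\beta_r = 0 \iff \beta_r' = 0$). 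Setting $u_+ := pe_a+qe_b$ and $u_- := -qe_a+pe_b$, substituting the parameterization yields $E_ru_+ = (1/p)E_re_a$ on $C^+$ and zero elsewhere, with the analogous identity $E_ru_- = -(1/q)E_re_a$ on $C^-$ and zero elsewhere; combined with $\sum_rE_r = I$ this forces $\sum_{r\in C^+}E_ru_+ = u_+$ and $\sum_{r\in C^-}E_ru_- = u_-$. Condition (c) together with the definition of $g$ now guarantees $e^{i\tau\theta_r}$ takes a single value $\lambda_1$ across $C^+$ and a single value $\lambda_2$ across $C^-$, so $U(\tau)u_+ = \lambda_1 u_+$ and $U(\tau)u_- = \lambda_2 u_-$ on the \emph{full} space. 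Hence $U(\tau)$ preserves $\spn(e_a,e_b)$ and, by unitarity, its orthogonal complement, i.e.\ $U(\tau)$ is block diagonal with respect to $\{S, V(X)\diff S\}$, which is exactly periodicity of $D_S$. Condition (d) then yields $\lambda_1 \neq \lambda_2$, so $U(\tau)_{S,S}$ is non-scalar and the revival is proper. The minimum-time claim falls out of the same calculation: any valid $\tau$ is a positive integer multiple of $2\pi/(g\sqrt{\Delta})$, and $\tau$ itself is the smallest such multiple compatible with (d).

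The point I expect to be delicate is verifying that $u_+$ and $u_-$ really are eigenvectors of $U(\tau)$ on the full vector space rather than merely inside the $2\times 2$ block — it is this global statement, and not the block-level identity \eqref{eq:profrspec}, that delivers genuine periodicity of $D_S$. Once that is nailed down, properness, the block decomposition, and the minimum-time claim all follow mechanically from conditions (c) and (d).
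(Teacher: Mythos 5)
The paper does not actually prove this theorem: it is quoted from \cite{FFR} and stated with an immediate \verb|\qed|. What the paper does supply, in the discussion preceding the statement, is the necessity direction — parallelity via the rank-one argument, commutativity via Corollary~\ref{coro:commalg}, the component structure of the support graph (Lemma~\ref{lem:profrsuppg}) combined with Theorem~\ref{thm:ratstran} for condition (c), and the explicit form \eqref{eq:profrspec} of $U_0$. Your necessity argument reproduces this development faithfully, and your derivation of (d) by contradiction (periodicity forces $\tau_0\in\frac{2\pi}{g\sqrt{\Delta}}\ints_{>0}$, and failure of (d) would make $e^{i\tau_0\theta_r}$ constant on $C^+\cup C^-$, hence $U_0$ scalar) is correct. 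The sufficiency half is the part the paper omits entirely, and your argument for it checks out: conditions (a) and (b) force, for every $r$ with $(E_r)_{a,b}=0$, that $E_re_a=E_re_b=0$, and otherwise $E_re_b=(q/p)E_re_a$ or $E_re_b=-(p/q)E_re_a$ according to the sign of $(E_r)_{a,b}$, where $p/q$ is the positive root of $t-1/t=\ga$; hence $u_+=pe_a+qe_b$ and $u_-=-qe_a+pe_b$ lie in the sums of eigenspaces indexed by $C^+$ and $C^-$ respectively, and (c) with the definition of $g$ makes each a genuine eigenvector of $U(\tau)$ for $\tau=2\pi/(g\sqrt{\Delta})$. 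Unitarity then gives the block structure, and (d) gives $\la_1\ne\la_2$, hence $U(\tau)_{a,b}=pq(\la_1-\la_2)\ne0$. The delicate point you flag is indeed the crux, and you resolve it correctly; note that $u_+u_+^T$ and $u_-u_-^T$ restricted to $\{a,b\}$ recover exactly the two matrices in \eqref{eq:profrspec}, so your construction is the global lift of the paper's block-level identity. One small imprecision: for the minimum-time claim you say $\tau$ is ``the smallest such multiple compatible with (d)''; in fact $m=1$ is always compatible with (d), since $e^{2\pi i(\rho_j-\om_\ell)/g}\ne1$ exactly when $(\rho_j-\om_\ell)/g\notin\ints$, so the minimum time is $2\pi/(g\sqrt{\Delta})$ outright.
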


\subsection{Equitable partition}
We review about equitable partitions of a graph \cite{algcomb}. 
Let $X$ be a graph on $n$ vertices. 
Let $\mathcal{P}=\{C_1,\ldots, C_k\}$ be a partition of $V(X)=\{1,\ldots, n\}$. 
The \textsl{characteristic matrix} $P$ of $\mathcal{P}$ is an $n\times k$ matrix
with $p_{j\ell}=1$ if $j\in C_{\ell}$ and $p_{j\ell}=0$ otherwise. 
The \textsl{normalized characteristic matrix} of the partition $\mathcal{P}$ is 
\[
\hat{P}=P\diag(\frac{1}{\sqrt{|C_1|}},\cdots, \frac{1}{\sqrt{|C_k|}}). 
\]
Note that $\hat{P}^T \hat{P}=I_k$ and $\hat{P}\hat{P}^T$ is the orthogonal projection onto the subspace of $\re^n$ consisting of vectors that are constant on cells of $\mathcal{P}$.
A partition $\mathcal{P}=\{C_1,\ldots, C_k\}$ is \textsl{equitable} if 
for any $j,\ell$, the number of neighbors which a vertex in $C_j$ has 
in $C_{\ell}$ is independent of the choice of vertex in $C_j$. 
Denote this fixed number of neighbors  as $c_{j\ell}$. 
Given an equitable partition $\mathcal{P}=\{C_1,\ldots, C_k\}$ of a graph $X$, 
a weighted graph can be associated to it. 
Let $\widehat{X/ \mathcal{P}}$ be the weighted graph with 
the cells of $\mathcal{P}$ as its vertices, 
with the  edge between $C_j$ and $C_{\ell}$ of weight $\sqrt{c_{j\ell}c_{\ell j}}$ (if $C_{j\ell}=0$ then  $C_j$ and $C_{\ell}$ are not adjacent). 
$\widehat{X/ \mathcal{P}}$ is called the \textsl{symmetrized quotient graph} of $X$ with respect to $\mathcal{P}$. 

Eigenvectors of $A(X)$ and of $A(\widehat{X/ \mathcal{P}})$ are related. 
\begin{lemma}\cite{algcomb}\label{lem:equeign}
Let $\mathcal{P}$ be an equitable partition of the graph $X$ with $k$ cells. 
Assume $\hat{P}$ is the normalized characteristic matrix of $\mathcal{P}$. 
Then
\begin{enumerate}[(a)]
\item
If $A(\widehat{X/ \mathcal{P}})x=\theta x$, 
then $A(X)\hat{P}x=\theta \hat{P}x$.
\item
If $A(X)y=\theta y$, 
then $A(\widehat{X/ \mathcal{P}})\hat{P}^Ty=\theta \hat{P}^Ty$
\end{enumerate}
\end{lemma}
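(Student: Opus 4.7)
The plan is to reduce both parts to a single intertwining identity of the form $A(X)\hat{P}=\hat{P}A(\widehat{X/\mathcal{P}})$, after which (a) and (b) fall out by multiplying on the right by $x$ or by $\hat{P}^{T}$ on the left by $A(X)$.

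First I would set up the quotient matrix. Let $B$ be the $k\times k$ matrix with $B_{j\ell}=c_{j\ell}$, and let $D=\diag(\sqrt{|C_1|},\ldots,\sqrt{|C_k|})$, so that the characteristic matrix satisfies $P=\hat{P}D$. The defining property of an equitable partition says that the row sum of $A(X)$ restricted to the columns indexed by $C_\ell$ is the same for every vertex in $C_j$; translated into matrix form this is the familiar identity $A(X)P=PB$.

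The next step is to recognise $A(\widehat{X/\mathcal{P}})$ as a symmetrisation of $B$. By the standard double-counting of edges between the cells $C_j$ and $C_\ell$, we have $|C_j|c_{j\ell}=|C_\ell|c_{\ell j}$, and from this I would check directly that $DBD^{-1}$ is symmetric with $(j,\ell)$-entry $\sqrt{c_{j\ell}c_{\ell j}}$; that is, $A(\widehat{X/\mathcal{P}})=DBD^{-1}$. Substituting $P=\hat{P}D$ into $A(X)P=PB$ and cancelling $D$ then gives the intertwining identity
\[
A(X)\hat{P}=\hat{P}\,A(\widehat{X/\mathcal{P}}).
\]

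With this identity in hand, part (a) is immediate: if $A(\widehat{X/\mathcal{P}})x=\theta x$, apply $\hat{P}$ on the left to get $A(X)\hat{P}x=\hat{P}A(\widehat{X/\mathcal{P}})x=\theta\hat{P}x$. For part (b) I would transpose the identity, using the fact that both $A(X)$ and $A(\widehat{X/\mathcal{P}})$ are symmetric, to obtain $\hat{P}^{T}A(X)=A(\widehat{X/\mathcal{P}})\hat{P}^{T}$; applying this to $y$ gives $A(\widehat{X/\mathcal{P}})\hat{P}^{T}y=\hat{P}^{T}A(X)y=\theta\hat{P}^{T}y$.

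The only step that requires any care is the symmetrisation: one has to verify that the matrix $DBD^{-1}$, which a priori is a similarity transform of a non-symmetric matrix, actually coincides with the adjacency matrix of the symmetrised quotient graph defined in the text. That identification rests entirely on the edge-counting relation $|C_j|c_{j\ell}=|C_\ell|c_{\ell j}$, so this is the main (and essentially only) combinatorial content of the proof; everything else is a manipulation of the intertwining identity.
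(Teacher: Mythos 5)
The paper states this lemma only as a citation to \cite{algcomb} and supplies no proof of its own, so there is nothing internal to compare against; your argument is correct and is the standard one. The chain $A(X)P=PB$, the edge count $|C_j|c_{j\ell}=|C_\ell|c_{\ell j}$ identifying $DBD^{-1}$ entrywise with $A(\widehat{X/\mathcal{P}})$, and the resulting intertwining identity $A(X)\hat{P}=\hat{P}A(\widehat{X/\mathcal{P}})$ (plus its transpose, using symmetry of both adjacency matrices) all check out, and since $\hat{P}^T\hat{P}=I_k$ the vector $\hat{P}x$ in part (a) is automatically nonzero.
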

\noindent
Given an orthonormal basis
 $x_1,\ldots, x_k$ of $\re^k$ that consists of eigenvectors of 
 $A(\widehat{X/\mathcal{P}})$,  part $(a)$ of the above lemma tells us that 
$
\hat{P}x_1, \ldots, \hat{P}x_k
$
 is a list of orthonormal eigenvectors of $A(X)$: 
 \[\langle \hat{P}x_j,\hat{P}x_{\ell}\rangle=\langle x_j,\hat{P}^T\hat{P}x_{\ell}\rangle=\langle x_j,I_kx_{\ell}\rangle=\delta_{j,\ell}.
 \]
We can further extend the list to an orthonormal basis of $\re^n$  consisting of eigenvectors of $A(X)$, say 
$\hat{P}x_1,\ldots, \hat{P}x_k, y_{k+1},\ldots, y_n$. 
Then for any $j\in\{k+1,\ldots, n\}$, 
we have $\hat{P}^Ty_j=0$. 
In fact $\langle \hat{P}^Ty_j, x_{\ell}\rangle =\langle y_j,\hat{P} x_{\ell}\rangle =0$
for all $\ell\in \{1,\ldots, k\}$ implies that $\hat{P}^Ty_j$ lies in the orthogonal complement of $\operatorname{Span}\{x_1,\ldots, x_k\}$, which is $\{0\}$.

\section{Stellar Fusion}\label{sec:FR}
Let $a, k$ and $c$ be positive integers. 
Let $X=X(a,k,c)$ denote the graph obtained by taking a copy of the star $K_{1,a+k}$ and a copy of the star $K_{1,c+k}$ and merging $k$ vertices of degree one from the first star with $k$ vertices of degree one from the second. 
In this section we only consider graphs of such type. 

Let $\mathcal{P}$ be a partition of $V(X)$, where the two centers of the two stars, denoted by 0 and 1, respectively, are both singletons of $\mathcal{P}$, the $k$ common neighbours of 0 and 1 form a cell, the remaining $a$ neighbours of $0$ form a cell, and the $c$ remaining neighbours of vertex 1 form a cell, as shown in Figure~\ref{fig:graphX}.   
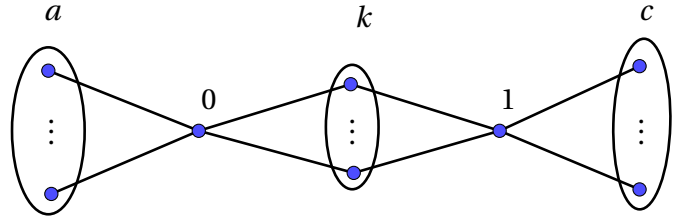
\begin{figure}[h]
\begin{center}
\definecolor{ududff}{rgb}{0.30196078431372547,0.30196078431372547,1.}
\begin{tikzpicture}[line cap=round,line join=round,>=triangle 45,x=1.0cm,y=1.0cm,scale=1.0]
\clip(0,0.8) rectangle (10.04,3.7);
\draw [rotate around={90.:(1.,2.)},line width=1.pt] (1.,2.) ellipse (1.1104616068093949cm and 0.48283017738921946cm);
\draw [rotate around={90.:(5.04,2.05)},line width=1.pt] (5.04,2.05) ellipse (0.8277224665102346cm and 0.3501777856544038cm);
\draw [rotate around={88.39399701071756:(8.89,2.09)},line width=1.pt] (8.89,2.09) ellipse (1.1353270165076959cm and 0.37837472750187456cm);
\draw [line width=1.pt] (1.,2.8)-- (3.,2.);
\draw [line width=1.pt] (1.04,1.16)-- (3.,2.);
\draw [line width=1.pt] (3.,2.)-- (5.02,2.62);
\draw [line width=1.pt] (3.,2.)-- (5.06,1.44);
\draw [line width=1.pt] (5.06,1.44)-- (7.,2.);
\draw [line width=1.pt] (5.02,2.62)-- (7.,2.);
\draw [line width=1.pt] (7.,2.)-- (8.86,2.86);
\draw [line width=1.pt] (7.,2.)-- (8.86,1.22);

\draw (0.8,3.8) node[anchor=north west] {$a$};
\draw (4.94,3.8) node[anchor=north west] {$k$};
\draw (8.72,3.8) node[anchor=north west] {$c$};
\draw (0.86,2.5) node[anchor=north west] {$\vdots$};
\draw (4.86,2.5) node[anchor=north west] {$\vdots$};
\draw (8.7,2.5) node[anchor=north west] {$\vdots$};
\draw (2.9,2.68) node[anchor=north west] {0};
\draw (6.88,2.68) node[anchor=north west] {1};

\begin{scriptsize}
\draw [fill=ududff] (3.,2.) circle (2.5pt);
\draw [fill=ududff] (7.,2.) circle (2.5pt);
\draw [fill=ududff] (1.,2.8) circle (2.5pt);
\draw [fill=ududff] (1.04,1.16) circle (2.5pt);
\draw [fill=ududff] (5.02,2.62) circle (2.5pt);
\draw [fill=ududff] (5.06,1.44) circle (2.5pt);
\draw [fill=ududff] (8.86,2.86) circle (2.5pt);
\draw [fill=ududff] (8.86,1.22) circle (2.5pt);
\end{scriptsize}
\end{tikzpicture}
\caption{Graph $X=X(a,k,c)$ obtained from two stars with parameters $a$, $k$ and $c$, and an equitable partition of it}
\label{fig:graphX}
\end{center}
\end{figure}
This is an equitable partition, and the corresponding symmetrized quotient graph is a weighted path on 5 vertices, as shown in Figure~\ref{fig:quotientX}, which we denote by $\hat{X}$.

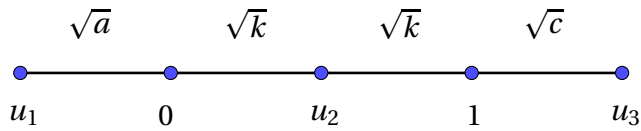
\begin{figure}[h]
\begin{center}
\definecolor{ududff}{rgb}{0.30196078431372547,0.30196078431372547,1.}
\begin{tikzpicture}[line cap=round,line join=round,>=triangle 45,x=1.0cm,y=1.0cm]
\clip(0,1.3) rectangle (10.04,2.9);
\draw [line width=1.pt] (1.,2.)-- (3.,2.);
\draw [line width=1.pt] (3.,2.)-- (5.,2.);
\draw [line width=1.pt] (5.,2.)-- (7.,2.);
\draw [line width=1.pt] (7.,2.)-- (9.,2.);
\draw (1.5,3.) node[anchor=north west] {$\sqrt{a}$};
\draw (3.58,3.) node[anchor=north west] {$\sqrt{k}$};
\draw (5.62,3.) node[anchor=north west] {$\sqrt{k}$};
\draw (7.56,3.) node[anchor=north west] {$\sqrt{c}$};
\draw (2.7,1.7) node[anchor=north west] {$0$};
\draw (6.8,1.7) node[anchor=north west] {$1$};
\draw (0.7,1.7) node[anchor=north west] {$u_1$};
\draw (4.7,1.7) node[anchor=north west] {$u_2$};
\draw (8.7,1.7) node[anchor=north west] {$u_3$};
\begin{scriptsize}
\draw [fill=ududff] (1.,2.) circle (2.5pt);
\draw [fill=ududff] (3.,2.) circle (2.5pt);
\draw [fill=ududff] (5.,2.) circle (2.5pt);
\draw [fill=ududff] (7.,2.) circle (2.5pt);
\draw [fill=ududff] (9.,2.) circle (2.5pt);
\end{scriptsize}
\end{tikzpicture}
\caption{the symmetrized quotient graph $\hat{X}$ of $X$ with respect to partition $\mathcal{P}$}
\label{fig:quotientX}
\end{center}
\end{figure}

\begin{lemma}\label{lem:SFC}
For any positive integers $a,k,c$, 
vertices 0 and 1 are parallel and 
the induced algebra of $X=X(a,k,c)$ on $\{0,1\}$  is commutative. 
Moreover, if $C^+$ and $C^-$ are defined as in Theorem~\ref{thm:FR} 
and $\mu = 2k+a+c$ and $\sg=4k^2+(a-c)^2$, 
then $C^+=\left\{ -\sqrt{\frac{\mu + \sqrt{\sg}}{2}},
 \sqrt{\frac{\mu + \sqrt{\sg}}{2}}\right\}$ and 
$C^-=\left\{-\sqrt{\frac{\mu - \sqrt{\sg}}{2}},
\sqrt{\frac{\mu - \sqrt{\sg}}{2}}\right\}$. \end{lemma}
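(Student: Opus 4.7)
The plan is to exploit the equitable partition $\mathcal{P}$ displayed in Figure~\ref{fig:graphX}, whose symmetrized quotient $\hat{X}$ is the weighted path on five vertices in Figure~\ref{fig:quotientX}. Since the cells containing vertices $0$ and $1$ are singletons, I would use Lemma~\ref{lem:equeign} together with the paragraph that follows it to conclude that every eigenvector of $A(X)$ in the orthogonal complement of $\hat{P}\re^5$ vanishes at vertices $0$ and $1$. Consequently, for each distinct eigenvalue $\theta$ of $A(X)$ the columns of $E_\theta$ indexed by $\{0,1\}$ are determined solely by the lifted quotient eigenvector: writing $x^{(\theta)}$ for the unit eigenvector of $B := A(\hat{X})$ with eigenvalue $\theta$, one has $E_\theta e_0 = x^{(\theta)}_0 \hat{P}x^{(\theta)}$ and $E_\theta e_1 = x^{(\theta)}_1 \hat{P}x^{(\theta)}$, so vertices $0$ and $1$ are parallel.

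Next I would diagonalize $B$. Eliminating the three leaf coordinates from $Bv = \theta v$ shows that, for $\theta \ne 0$, $(v_0, v_1)^T$ is an eigenvector of $M := \begin{pmatrix} a+k & k \\ k & k+c \end{pmatrix}$ with eigenvalue $\theta^2$; the characteristic polynomial of $M$ is $t^2 - \mu t + (ak+ac+ck)$ with discriminant $(a-c)^2 + 4k^2 = \sigma$. This yields the four nonzero eigenvalues $\pm\sqrt{(\mu \pm \sqrt{\sigma})/2}$ of $B$; inspecting $Bv = 0$ directly gives the fifth eigenvalue $0$ with $v_0 = v_1 = 0$. From the row relation $k v_1 = (\theta^2 - a - k) v_0$, the ratio $\rho := v_1/v_0$ equals $\rho_+ := (c-a+\sqrt{\sigma})/(2k) > 0$ when $\theta^2 = (\mu+\sqrt{\sigma})/2$, and $\rho_- := (c-a-\sqrt{\sigma})/(2k) < 0$ when $\theta^2 = (\mu-\sqrt{\sigma})/2$; the signs are forced by $\sqrt{\sigma} > |a-c|$.

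For commutativity of $\cA(\{0,1\})$, each generator $(E_\theta)_{\{0,1\},\{0,1\}}$ equals either $0$ (when $\theta = 0$) or $(x^{(\theta)}_0)^2 \begin{pmatrix} 1 & \rho \\ \rho & \rho^2 \end{pmatrix}$ with $\rho \in \{\rho_+, \rho_-\}$, i.e.\ a nonnegative scalar multiple of $(1,\rho_+)^T(1,\rho_+)$ or $(1,\rho_-)^T(1,\rho_-)$. A one-line computation gives $\rho_+\rho_- = -1$, hence $(1,\rho_+) \perp (1,\rho_-)$, and rank-one symmetric matrices built on orthogonal directions commute. Finally, the sign of $(E_\theta)_{0,1} = \rho \cdot (x^{(\theta)}_0)^2$ matches the sign of $\rho$, identifying $C^+$ and $C^-$ as claimed.

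The only real obstacle is the bookkeeping needed to isolate the quotient contribution at the two singleton cells; once the non-quotient eigenvectors are seen to vanish at $0$ and $1$, parallelism, commutativity, and the identification of $C^\pm$ all reduce to the explicit eigenvalue analysis of the $2\times 2$ matrix $M$, and no further computation is required.
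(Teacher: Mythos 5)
Your proof is correct and follows the same overall route as the paper's: both exploit the equitable partition of Figure~\ref{fig:graphX}, invoke Lemma~\ref{lem:equeign} and the remark following it to see that every eigenvector of $A(X)$ outside the lifted quotient space vanishes on the singleton cells $\{0\}$ and $\{1\}$, and then read off parallelism, commutativity, and the signs of $(E_r)_{0,1}$ from the quotient eigenvectors alone. Where you differ is in execution, and on two counts your version is cleaner. First, the paper diagonalizes the weighted path $\hat{X}$ by writing out all five eigenvectors explicitly as in \eqref{eq:eigvec}, whereas you eliminate the three leaf coordinates and reduce to the $2\times 2$ matrix $M=\pmat{a+k&k\\ k&k+c}$ with characteristic polynomial $t^2-\mu t+(ak+ac+kc)$ and discriminant $\sigma$; the resulting eigenvalues $\theta^2=(\mu\pm\sqrt{\sigma})/2$ and ratios $\rho_{\pm}=(c-a\pm\sqrt{\sigma})/(2k)$ carry exactly the information needed at vertices $0$ and $1$, and your $\rho_+$ agrees with the ratio of the corresponding entries of $v_4,v_5$ in \eqref{eq:eigvec} after rationalizing. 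Second, for commutativity the paper checks that $\bigl((E_r)_{0,0}-(E_r)_{1,1}\bigr)/(E_r)_{0,1}$ is constant and appeals to Theorem~\ref{thm:commuindal}, while you observe $\rho_+\rho_-=-1$, so the generators $(E_r)_{\{0,1\},\{0,1\}}$ are nonnegative multiples of rank-one symmetric matrices built on two orthogonal directions and hence commute outright, bypassing Theorem~\ref{thm:commuindal} altogether. Both arguments are complete; the one step you use implicitly and might state explicitly is that $\rho=v_1/v_0=(\theta^2-a-k)/k$ depends only on $\theta^2$, which is what places $\pm\sqrt{(\mu+\sqrt{\sigma})/2}$ together in $C^+$ and $\pm\sqrt{(\mu-\sqrt{\sigma})/2}$ together in $C^-$.
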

\begin{proof}
Let $\hat{X}$ be the symmetrized quotient graph of $X$ with respect to the above equitable partition $\mathcal{P}$. Let $\mu = 2k+a+c$ and $\sg=4k^2+(a-c)^2$. 
Then the eigenvalues of $\hat{X}$ are 
\begin{align}\label{eq:eigenvalues}
&\theta_1=0, \nonumber\\ 
&\theta_2= -\sqrt{\frac{\mu - \sqrt{\sg}}{2}}, \quad
\theta_3= \sqrt{\frac{\mu - \sqrt{\sg}}{2}},\\
&\theta_4= -\sqrt{\frac{\mu + \sqrt{\sg}}{2}},\quad
\theta_5= \sqrt{\frac{\mu + \sqrt{\sg}}{2}}.\nonumber
\end{align}
with corresponding eigenvectors (with vertices ordered as $u_1,0,u_2,1,u_3$)
\[
v_1=\left[
\frac{\sqrt{c}}{\sqrt{a}}, 0, -\frac{\sqrt{c}}{\sqrt{k}}, 0, 1\right]^T
\]
\begin{equation}\label{eq:eigvec}
v_2=\begin{bmatrix}-\frac{c \sqrt{a} - a^{3/2} + \sqrt{a} \sqrt{\sg}}{2 k \sqrt{c}}\\
 \frac{\sqrt{\mu - \sqrt{\sg}} \left(c-a + \sqrt{\sg}\right)}{2 \sqrt{2} k \sqrt{c}}\\
-\frac{-2 k + c - a + \sqrt{\sg}}{2 \sqrt{kc}}\\
-\frac{\sqrt{\mu - \sqrt{\sg }}}{\sqrt{2c}}\\
1\end{bmatrix},
v_3=\begin{bmatrix}-\frac{c \sqrt{a} - a^{3/2} + \sqrt{a} \sqrt{\sg}}{2 k \sqrt{c}}\\
- \frac{\sqrt{\mu - \sqrt{\sg}} \left(c-a + \sqrt{\sg}\right)}{2 \sqrt{2} k \sqrt{c}}\\
-\frac{-2 k + c - a + \sqrt{\sg}}{2 \sqrt{kc}}\\
\frac{\sqrt{\mu - \sqrt{\sg }}}{\sqrt{2c}}\\
1\end{bmatrix},
\end{equation}
\[
v_4=\begin{bmatrix}-\frac{c \sqrt{a} - a^{3/2} - \sqrt{a} \sqrt{\sg}}{2 k \sqrt{c}}\\
 -\frac{\sqrt{\mu + \sqrt{\sg}} \left(-c +a + \sqrt{\sg}\right)}{2 \sqrt{2} k \sqrt{c}}\\
-\frac{-2 k + c - a - \sqrt{\sg}}{2 \sqrt{kc}}\\
-\frac{\sqrt{\mu + \sqrt{\sg}}}{\sqrt{2c}}\\ 
1\end{bmatrix},
v_5=\begin{bmatrix}-\frac{c \sqrt{a} - a^{3/2} - \sqrt{a} \sqrt{\sg}}{2 k \sqrt{c}}\\
 \frac{\sqrt{\mu + \sqrt{\sg}} \left(-c +a + \sqrt{\sg}\right)}{2 \sqrt{2} k \sqrt{c}}\\
-\frac{-2 k + c - a - \sqrt{\sg}}{2 \sqrt{kc}}\\
\frac{\sqrt{\mu + \sqrt{\sg}}}{\sqrt{2c}}\\ 
1\end{bmatrix}.
\]

As the characterization of proper fractional revival in Theorem~\ref{thm:FR} is for unweighted graphs (in fact it works for integer weighted graphs but not for all weighted graphs), 
we now use Lemma~\ref{lem:equeign} and the comments below it to further get eigenvector information of $A(X)$. 
First let $\hat{P}$ be the normalized characteristic matrix of the equitable partition $\mathcal{P}$ and for $j=1,\ldots 5$ let $(\theta_j,v_j)$ be an eigenpair of $A(\hat{X})$  as above with $v_j$s scaled to have norm 1. 
Then $u_j=\hat{P}v_j$ is an eigenvector of $A(X)$ associated to $\theta_j$, $j=1,\ldots, 5$. 
Now assume that $u_1,\ldots, u_5,u_6,\ldots, u_{a+k+c+2}$ is an orthonormal basis of $\re^{a+k+c+2}$ consisting of eigenvectors of $A(X)$. 
For $j=6,\ldots, a+k+c+2$, the eigenvector $u_j$ is associated to eigenvalue 0 (as $\operatorname{rk}(A(X))=4$), 
and 
its components on each cell of $\mathcal{P}$ sum up to 0 (as $\hat{P}^Tu_j=0$).
Therefore  the components of $u_j$ corresponding to vertices 0 and 1 are both 0, as the two vertices are singletons of the partition. 
Since $u_1=Pv_1$ also has the two corresponding components 0,  vertex 0 and vertex 1 have the same eigenvalue support, and the support is $\Phi=\{\theta_2,\theta_3,\theta_4,\theta_5\}$. 
Furthermore, as $\theta_2,\theta_3,\theta_4$ and $\theta_5$ are all simple eigenvalues of $A(X)$, we know that vertex 0 and vertex 1 are parallel 
and orthogonal projection onto the eigenspace associated to $\theta_j$ of $A(X)$ is $E_j=u_ju_j^T=\hat{P}v_jv_j^T\hat{P}^T$ for $j=2,3,4,5$. 
Now  observe that  $\frac{(E_r)_{0,0}-(E_r)_{1,1}}{(E_r)_{0,1}}$ is a fixed number 
for $r=2,3,4,5$;  
by Theorem~\ref{thm:commuindal}, the induced algebra  $\cA(\{0,1\})$ is commutative. 
Since $(E_j)_{0,1}>0$ for $j=4,5$ and $(E_j)_{0,1}<0$ for $j=2,3$, 
we have $C^+ =\{\theta_4,\theta_5\}$ and $C^-=\{\theta_2,\theta_3\}$.
\qed
\end{proof}

\smallbreak\noindent
For a rational number $x$, let $|x|_2$ denote the 2-adic norm of $x$.
\begin{lemma}\label{lem:FReigensq}
Let $X=X(a,k,c)$ and let $\theta_3$ and $\theta_5$ be the two eigenvalues of $X$ as given in \eqref{eq:eigenvalues}. 
Then $X$ admits fractional revival on $\{0,1\}$ if and only if  $\theta_3^2$ and $\theta_5^2$ are both integers with the same square-free part $\De$ and  further proper fractional revival occurs if and only if $\left|\frac{\theta_3}{\sqrt{\De}}\right|_2\neq \left|\frac{\theta_5}{\sqrt{\De}}\right|_2$.
\end{lemma}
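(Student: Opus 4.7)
The plan is to specialize Theorem~\ref{thm:FR} to $X=X(a,k,c)$ using the data of Lemma~\ref{lem:SFC}. That lemma already verifies conditions (a) and (b) of Theorem~\ref{thm:FR} for the pair $\{0,1\}$, so fractional revival on $\{0,1\}$ is equivalent to condition (c), and proper fractional revival additionally requires (d). Moreover $C^+=\{-\theta_5,\theta_5\}$ and $C^-=\{-\theta_3,\theta_3\}$ with $\theta_3,\theta_5>0$, so each of $C^+$ and $C^-$ contributes only one nontrivial internal difference, namely $2\theta_5/\sqrt{\Delta}$ and $2\theta_3/\sqrt{\Delta}$.

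For the first equivalence, suppose (c) holds with square-free integer $\Delta$. Then $\theta_5=\rho_5\sqrt{\Delta}$ with $2\rho_5\in\ints$, so $\theta_5^2\in\rats$. Since $\theta_5^2=(\mu+\sqrt{\sigma})/2$ with $\mu,\sigma\in\ints$, this forces $\sqrt{\sigma}=s\in\ints$, and the parity identity $\mu\equiv a+c\equiv(a-c)^2\equiv s^2\equiv s\pmod{2}$ makes both $\theta_3^2=(\mu-s)/2$ and $\theta_5^2=(\mu+s)/2$ integers. Writing $\rho_5=n_5/2$, the equation $n_5^2\Delta/4=\theta_5^2\in\ints$ combined with square-freeness of $\Delta$ forces $n_5$ to be even (otherwise $n_5^2$ would be odd and $4\mid\Delta$, contradicting that $\Delta$ is square-free and positive). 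Hence $\theta_5=m_5\sqrt{\Delta}$ with $m_5\in\nn$; symmetrically $\theta_3=m_3\sqrt{\Delta}$. Thus $\theta_3^2$ and $\theta_5^2$ share the common square-free part $\Delta$. Conversely, if $\theta_3^2=m_3^2\Delta$ and $\theta_5^2=m_5^2\Delta$ with $\Delta$ their common square-free part, then the assignment $\rho_5=m_5$, $\omega_3=m_3$ immediately verifies (c).

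For the proper revival criterion, assume $\theta_3=m_3\sqrt{\Delta}$, $\theta_5=m_5\sqrt{\Delta}$ with $m_3,m_5\in\nn$. The nonzero differences inside $C^+$ and $C^-$ are $\pm 2m_5$ and $\pm 2m_3$, so $g=2\gcd(m_5,m_3)$. Setting $d=\gcd(m_5,m_3)$, $u=m_5/d$, $v=m_3/d$, so $\gcd(u,v)=1$, the four values $(\rho_j-\omega_\ell)/g$ (with $j\in C^+$, $\ell\in C^-$) are exactly $\pm(u\pm v)/2$. Condition (d), that at least one of these is not an integer, becomes: $u+v$ is odd, i.e.\ $u$ and $v$ have different parities. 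Since $\gcd(u,v)=1$, this is equivalent to $|u|_2\ne|v|_2$, and by multiplicativity of the $2$-adic norm to $|m_5|_2\ne|m_3|_2$, i.e.\ $\bigl|\theta_5/\sqrt{\Delta}\bigr|_2\ne\bigl|\theta_3/\sqrt{\Delta}\bigr|_2$. Nothing here is conceptually deep; the step that needs the most care is the divisibility argument forcing $n_5$ (and $n_3$) to be even, and it is exactly there that the square-freeness of $\Delta$ is essential.
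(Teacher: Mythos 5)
Your proposal is correct and follows essentially the same route as the paper: reduce to conditions (c) and (d) of Theorem~\ref{thm:FR} via Lemma~\ref{lem:SFC}, show (c) forces $\theta_3,\theta_5$ to be integer multiples of $\sqrt{\Delta}$ (using square-freeness to rule out half-integer multiples), and translate (d) into the $2$-adic condition via $g=2\gcd(m_3,m_5)$. The only cosmetic difference is that you establish integrality of $\theta_j^2$ by an explicit parity computation on $(\mu\pm\sqrt{\sigma})/2$, where the paper simply notes that $\theta_j^2$ is a rational algebraic integer; both are fine.
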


\begin{proof}
From Lemma~\ref{lem:SFC} we know that 
the induced algebra $\cA(\{0,1\})$ is commutative
 and vertices 0 and 1 are parallel 
with the same eigenvalue support $C^+\cup C^-$, where $C^+=\{\theta_5,\theta_4=-\theta_5\}$ and $C^-=\{\theta_3,\theta_2=-\theta_3\}$. 
Condition $(c)$ in Theorem~\ref{thm:FR} is equivalent to  both $2\theta_3$ and $2\theta_5$ being integer multiples of $\sqrt{\De}$ for some  square-free integer $\De$,
which holds if and only if  
\[
\frac{\theta_3}{\sqrt{\De}}\in \ints, \frac{\theta_5}{\sqrt{\De}}\in \ints. 
\]
In fact, for $j=3,5$,
there exists an integer $m_j$ such that $2\theta_j=m_j \sqrt{\De}$
 if and only if $4\theta_j^2=m_j^2 \De$, 
if and only if $\theta_j^2=m_j^2 \De/4$.
As $\theta_j$ is an algebraic integer and $m_j^2 \De/4$ is a rational number, we conclude that $\theta_j^2=m_j^2 \De/4$ is an integer. 
Since $\De$ is square-free, we know $m_j^2$ is an even integer, 
 hence $m_j$ is an even integer and $\theta_j$ is an integer multiple of $\sqrt{\De}$.   
Now assume that 
\[
\theta_3=\al\sqrt{\De},\;\; \theta_5=\be\sqrt{\De}
\]
for some integers $\al$ and $\be$.  
Since fractional revival occurs if and only if condition $(a),(b)$ and $(c)$ of Theorem~\ref{thm:FR} hold, we have proved the first claim. 
For proper fractional revival to occur, we just need to show that condition $(d)$ in Theorem~\ref{thm:FR} holds if and only if the multiplicities of 2 as a factor of $\al$ and $\be$ are different. 
With the notation of Theorem~\ref{thm:FR}
\begin{align*}
g&=\gcd\left\{\frac{\theta_r-\theta_s}{\sqrt{\De}}\; : \, r,s\in C^+, \text{ or } r,s\in C^-\right\}\\
&=\gcd\left\{\frac{\theta_3-\theta_2}{\sqrt{\De}},\frac{\theta_5-\theta_4}{\sqrt{\De}}\right\}\\
&=2\gcd\left\{\al,\be\right\}
\end{align*}
and $\frac{\rho_3-\om_5}{g}=\frac{\al-\be}{2\gcd\{\al,\be\}}$, which is not an integer if and only if the multiplicities of 2 as a factor in $\al$ and $\be$ are distinct, that is, $|\al|_2\neq |\be|_2$.
In this case, 
the minimum time for proper fractional revival to occur is $\tau=\frac{\pi}{\gcd\{\al,\be\}\sqrt{\De}}$.
\qed
\end{proof}

Recall that a graph $X$ is said to be periodic if there is a time $t$ 
such that $D_a$ is periodic at time $t$ for all $a\in V(X)$.
\begin{corollary}\label{coro:FRimpperi}
If $X(a,k,c)$ admits fractional revival between vertices 0 and 1, 
then $X$ is periodic with minimum period $t=\frac{2\pi}{g\sqrt{\De}}$, 
where $g=\gcd\left\{ \theta_3/ \sqrt{\De}, \theta_5/ \sqrt{\De}\right\}$.
\end{corollary}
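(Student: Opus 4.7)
The plan is to reduce periodicity of $X=X(a,k,c)$ to an arithmetic condition on its nonzero eigenvalues and then apply \lref{FReigensq}. My first step is the standard observation that, for a connected graph, the whole graph is periodic at time $t$ if and only if $U(t)=cI$ for some unit complex number $c$. Indeed, periodicity of every $D_a$ forces $U(t)$ to commute with every diagonal matrix and hence to be diagonal; since $U(t)=e^{itA}$ commutes with $A$, comparing $(U(t)A)_{a,b}$ with $(AU(t))_{a,b}$ for adjacent $a,b$ gives $U(t)_{a,a}=U(t)_{b,b}$, and connectedness of $X$ propagates this equality across $V(X)$.

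Next, $U(t)=cI$ is equivalent to $e^{it\th_r}=c$ for every distinct eigenvalue $\th_r$ of $A(X)$, i.e., to $t(\th_r-\th_s)\in 2\pi\ints$ for every pair of eigenvalues. From the proof of \lref{SFC} we have $\rk(A(X))=4$, so the spectrum of $A(X)$ is $\{0,\pm\th_3,\pm\th_5\}$. Invoking \lref{FReigensq}, fractional revival on $\{0,1\}$ implies $\th_3=\al\sqrt{\De}$ and $\th_5=\be\sqrt{\De}$ for integers $\al,\be$ and a square-free integer $\De$, so every eigenvalue of $A(X)$ is an integer multiple of $\sqrt{\De}$, and the periodicity condition collapses to the two requirements $t\al\sqrt{\De}\in 2\pi\ints$ and $t\be\sqrt{\De}\in 2\pi\ints$.

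Finally, setting $u=t\sqrt{\De}/(2\pi)$, I need $u\al,u\be\in\ints$; writing $u=p/q$ in lowest terms forces $q\mid\al$ and $q\mid\be$, hence $q\mid g$, and the smallest positive such $u$ is therefore $1/g$, giving the minimum period $t=2\pi/(g\sqrt{\De})$. Nothing here poses a genuine obstacle; the step that most deserves care is the equivalence between pointwise periodicity at every vertex and $U(t)$ being a scalar matrix, which uses connectedness of $X$ in an essential way and relies on the fact that the remaining eigenvectors of $A(X)$ all lie in the kernel.
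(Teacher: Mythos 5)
Your argument is correct and follows essentially the same route as the paper: reduce periodicity of $X(a,k,c)$ to the condition that all eigenvalues are integer multiples of a common $\sqrt{\De}$, feed in Lemma~\ref{lem:FReigensq} to get $\th_3=\al\sqrt{\De}$, $\th_5=\be\sqrt{\De}$, and read off the minimum period as $2\pi/(g\sqrt{\De})$ with $g=\gcd\{\al,\be\}$. The only difference is that where the paper simply cites the periodicity criterion and minimum-period formula from Corollary~3.3 of \cite{Periodic}, you re-derive it on the spot (periodicity at every vertex of a connected graph forces $U(t)=cI$, hence $e^{it\th_r}$ constant over the spectrum), which makes the proof self-contained but adds nothing essentially new.
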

\begin{proof}
From Corollary 3.3 of \cite{Periodic}, 
we know that a graph $X$ with adjacency matrix $A=\sum_{r=1}^m\theta_r E_r$ is periodic if and only if 
all its eigenvalues are integer multiples of $\sqrt{\De}$ for a square-free integer $\De$ ($\De=1$ if all the eigenvalues are integers); in this case the minimum period for $X$ is  
$t=(2\pi)/(g\sqrt{\De})$,
where $g$ is the greatest common divisor of 
$\{\theta_r / \sqrt{\De}\,\}_{r=1,\ldots,m}$.\\
The graph $X(a,k,c)$ has exactly 5 distinct eigenvalues as shown in \eqref{eq:eigenvalues}, 
with $\theta_2,\theta_3=-\theta_2,\theta_4,\theta_5=-\theta_4$ all simple, 
and $\theta_1 = 0$. 
Now the result follows from Lemma~\ref{lem:FReigensq}.\qed
\end{proof}

\begin{corollary}\label{coro:frperi}
If proper fractional revival occurs on $\{0,1\}$ in $X=X(a,k,c)$ at time $\tau$, 
then it also occurs at $(2j+1)\tau$ for all positive integer $j$, 
and $X$ is periodic at $2j\tau$ for all positive integer $j$. 
\end{corollary}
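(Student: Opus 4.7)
My plan is to show directly that $U(2\tau) = I$, from which both conclusions fall out immediately.

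The first step will be to note that proper fractional revival on $\{0,1\}$ means the state $D_0 + D_1$ is periodic at time $\tau$, so the observation recalled just before Theorem~\ref{thm:ratstran} will imply that the function $e^{i\tau z}$ is constant on each connected component of the support graph of $D_0 + D_1$. By Lemma~\ref{lem:SFC}, the two nontrivial components of this support graph are $C^+ = \{-\th_5, \th_5\}$ and $C^- = \{-\th_3, \th_3\}$, with $\th_1 = 0$ an isolated vertex. Constancy on $C^+$ forces $e^{i\tau\th_5} = e^{-i\tau\th_5}$, hence $e^{2i\tau\th_5} = 1$; similarly $e^{2i\tau\th_3} = 1$, and $e^{2i\tau\th_1} = 1$ is automatic. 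The spectral decomposition will then yield
\[
U(2\tau) \;=\; \sum_{r=1}^{5} e^{2i\tau\th_r}\, E_r \;=\; \sum_{r=1}^{5} E_r \;=\; I.
\]

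Once $U(2\tau) = I$ is in hand, both conclusions are immediate. For any positive integer $j$, $U(2j\tau) = U(2\tau)^j = I$, so every vertex of $X$ is trivially periodic at $2j\tau$, and hence $X$ is periodic at $2j\tau$. Likewise $U((2j+1)\tau) = U(2j\tau)\,U(\tau) = U(\tau)$, so conjugation by $U((2j+1)\tau)$ preserves $D_0 + D_1$, giving fractional revival at $(2j+1)\tau$. Properness transfers because $U((2j+1)\tau)_{0,1} = U(\tau)_{0,1} \neq 0$.

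I do not anticipate any real obstacle. The only place where the specific structure of $X(a,k,c)$ enters is via Lemma~\ref{lem:SFC}, which identifies the two nontrivial support-graph components as opposite-sign eigenvalue pairs; this symmetry is precisely what allows the factor of $2$ in $U(2\tau)$ to annihilate all the phases simultaneously, giving $U(2\tau) = I$ rather than merely a unimodular scalar multiple of $I$.
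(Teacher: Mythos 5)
Your argument is correct, and it reaches the conclusion by a cleaner and slightly more general route than the paper. The paper's own proof does not compute $U(2\tau)$ explicitly: it first identifies $\tau$ with the minimum proper-revival time $\pi/(g\sqrt{\Delta})$ from Lemma~\ref{lem:FReigensq}, invokes Corollary~\ref{coro:FRimpperi} to see that the minimum period of $X$ is exactly $2\tau$, and then argues with the block structure of $U(\tau)$ and $U(2\tau)$ (the $2\times2$ block of $U(2\tau)$ is scalar, that of $U(\tau)$ is not, and powers preserve this). Your proof instead extracts $U(2\tau)=I$ directly from the support-graph components: since each nontrivial component is a $\pm$ pair $\{-\theta,\theta\}$, constancy of $e^{i\tau z}$ on a component forces $e^{2i\tau\theta}=1$, and the spectral decomposition does the rest. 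This buys two things: you get the identity $U(2\tau)=I$ on the nose rather than merely ``diagonal with scalar block,'' and your argument applies verbatim to \emph{any} time $\tau$ at which proper fractional revival occurs, whereas the paper's proof tacitly assumes $\tau$ is the minimum such time (this is harmless, since proper revival in $X(a,k,c)$ only occurs at odd multiples of the minimum time, but the paper does not say so). One small attribution point: Lemma~\ref{lem:SFC} computes the sets $C^{+}$ and $C^{-}$; the fact that these are precisely the two nontrivial connected components of the support graph of $D_{\{0,1\}}$ is supplied by Lemma~\ref{lem:profrsuppg} and the discussion following it, so you should cite that as well. With that adjustment your proof stands as written.
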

\begin{proof}
From Lemma~\ref{lem:FReigensq} and Corollary~\ref{coro:FRimpperi}, 
we know that the minimum time for fractional revival on $\{0,1\}$ to occur in $X(a,k,c)$ 
is $\tau=(\pi)/(g\sqrt{\De})$ and the minimum period is $t=2\tau=(2\pi)/(g\sqrt{\De})$, 
where $g=\gcd\left\{ \theta_3/ \sqrt{\De}, \theta_5/ \sqrt{\De}\right\}
=\gcd\left\{\al,\be\right\}$. 
Both $U(\tau)$ and $U(2\tau)$ are block diagonal, 
with $U(2\tau)_{\{0,1\},\{0,1\}}$ being a scalar matrix, 
therefore $U(2j\tau)$ is block diagonal with the $2\times 2$ block being a scalar matrix,  
 and $U((2j+1)\tau)$ is block diagonal with the $2\times 2$ block not a scalar matrix. 
 \qed
 \end{proof}

Note that there exist parameters $a,k,c$ such that the graph $X(a,k,c)$ is periodic, 
but does not admit proper fractional revival.
\begin{example}
Let $X_1=X(1,4,1)$, then $\theta_3=1$ and $\theta_5=5$. 
Therefore $|\theta_3|_2=|\theta_5|_2=1$. 
So $X$ is periodic but does not admit proper fractional revival, by Lemma~\ref{lem:FReigensq} and Corollary~\ref{coro:FRimpperi}.
Similarly,  $X_2=X(1,16,25)$ is also such a graph, with $\theta_3=3$ and $\theta_5=7$.

\end{example}

\begin{corollary}\label{coro:FRagain}
$X(a,k,c)$ admits proper fractional revival if and only if there exist integers $\al,\be$ with $|\al|_2\neq|\be|_2$ and an square-free integer $\De$, such that 
\begin{eqnarray}
\De(\be^2-\al^2)=\sqrt{4k^2+(a-c)^2},\label{eqn:sumsofsquares}\\
\De(\al^2+\be^2)=2k+a+c.\label{eqn:differofsquares}
\end{eqnarray}
\end{corollary}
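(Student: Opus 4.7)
The plan is to invoke Lemma~\ref{lem:FReigensq} as a black box and simply rewrite its conditions on $\theta_3$ and $\theta_5$ as conditions on $\alpha$, $\beta$, $\Delta$ by using the closed-form eigenvalues given in \eqref{eq:eigenvalues}. First I would record the two algebraic identities that fall out of \eqref{eq:eigenvalues}:
\[
\theta_3^2 + \theta_5^2 \;=\; \mu \;=\; 2k+a+c, \qquad \theta_5^2 - \theta_3^2 \;=\; \sqrt{\sigma} \;=\; \sqrt{4k^2+(a-c)^2}.
\]

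For the forward direction, assume that proper fractional revival occurs on $\{0,1\}$. By Lemma~\ref{lem:FReigensq} there is a square-free integer $\Delta$ such that $\theta_3 = \alpha\sqrt{\Delta}$ and $\theta_5 = \beta\sqrt{\Delta}$ for integers $\alpha,\beta$ with $|\alpha|_2 \ne |\beta|_2$. Substituting $\theta_3^2 = \alpha^2\Delta$ and $\theta_5^2 = \beta^2\Delta$ into the two identities immediately yields \eqref{eqn:differofsquares} (from the sum) and \eqref{eqn:sumsofsquares} (from the difference).

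For the converse, given integers $\alpha,\beta$ and a square-free integer $\Delta$ satisfying the two displayed equations with $|\alpha|_2 \ne |\beta|_2$, I would add and subtract the equations and halve to get
\[
\alpha^2 \Delta \;=\; \frac{\mu - \sqrt{\sigma}}{2} \;=\; \theta_3^2, \qquad \beta^2 \Delta \;=\; \frac{\mu + \sqrt{\sigma}}{2} \;=\; \theta_5^2.
\]
Since $\Delta$ is square-free and $\alpha,\beta$ are integers, $\theta_3^2$ and $\theta_5^2$ are integers with the same square-free part $\Delta$. The hypothesis $|\alpha|_2 \ne |\beta|_2$ is precisely $|\theta_3/\sqrt{\Delta}|_2 \ne |\theta_5/\sqrt{\Delta}|_2$, so Lemma~\ref{lem:FReigensq} again delivers proper fractional revival on $\{0,1\}$.

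There is essentially no obstacle here; the corollary is just a translation of Lemma~\ref{lem:FReigensq} through the explicit eigenvalue formulas. The only minor housekeeping worth flagging is that $\mu>0$ forces $\Delta>0$, and that $\beta^2 > \alpha^2$ (so that the right-hand side of \eqref{eqn:sumsofsquares} has the correct sign) is automatic since $\theta_5^2 > \theta_3^2$ whenever $\sigma>0$, which holds because $k\ge 1$.
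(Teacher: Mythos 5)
Your proposal is correct and matches the paper's own proof: both invoke Lemma~\ref{lem:FReigensq} to reduce proper fractional revival to the conditions $\theta_3^2=\alpha^2\Delta$, $\theta_5^2=\beta^2\Delta$ with $|\alpha|_2\neq|\beta|_2$, and then observe that summing and differencing these, via the explicit formulas in \eqref{eq:eigenvalues}, is exactly equivalent to \eqref{eqn:sumsofsquares} and \eqref{eqn:differofsquares}. Your added remarks on signs are harmless housekeeping that the paper leaves implicit.
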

\begin{proof}
From Lemma~\ref{lem:FReigensq} we know that $X$ admits proper fractional revival on $\{0,1\}$ if and only if 
\begin{equation}\label{eq:eig3}
\theta_3^2=\frac12(2 k + a + c - \sqrt{4 k^2 + (a-c)^2})=\al^2\De\end{equation}
 and 
\begin{equation}\label{eq:eig5}
\theta_5^2=\frac12(2 k + a + c + \sqrt{4 k^2 + (a-c)^2})=\be^2\De
\end{equation}
for some integers $\al, \be$ and $\De$, with $\De$ square-free, and $|\al|_2\neq|\be|_2$.

The result follows as equations \eqref{eq:eig3} and \eqref{eq:eig5} are equivalent to \eqref{eqn:sumsofsquares} and \eqref{eqn:differofsquares}. \qed
\end{proof}

Now we show that there are triples $(a,k,c)$ such that $X(a,k,c)$ admits proper fractional revival.

\begin{theorem}
There exist graphs which admit proper fractional revival  between non-cospectral vertices, and there are infinitely many such graphs. 
\end{theorem}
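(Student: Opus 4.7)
My plan is to use the characterization of proper fractional revival in Corollary~\ref{coro:FRagain} to reduce the statement to exhibiting integer triples $(a,k,c)$ with $a\ne c$ satisfying a Diophantine system, and then to check that 0 and 1 fail to be cospectral whenever $a\ne c$. For the latter point, note that $X\diff 0 \cong K_{1,k+c}\cup \comp{K}_a$ and $X\diff 1\cong K_{1,k+a}\cup \comp{K}_c$, so
\[
\phi(X\diff 0,t)=t^{a+k+c-1}(t^2-(k+c)),\qquad \phi(X\diff 1,t)=t^{a+k+c-1}(t^2-(k+a)),
\]
and these polynomials agree if and only if $a=c$. Hence any triple with $a\ne c$ meeting the hypotheses of Corollary~\ref{coro:FRagain} automatically gives proper fractional revival between non-cospectral vertices.

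To find such a triple, I seek integers $\alpha,\beta$ and a square-free integer $\Delta$ with $\lvert\alpha\rvert_2\ne\lvert\beta\rvert_2$ satisfying $\Delta(\beta^2-\alpha^2)=\sqrt{4k^2+(a-c)^2}$ and $\Delta(\alpha^2+\beta^2)=2k+a+c$. The first equation forces $4k^2+(a-c)^2$ to be a perfect square, a Pythagorean constraint. Taking the primitive triple $(3,4,5)$, I set $a-c=3$ and $2k=4$, so $k=2$ and the radical equals $5$. Choosing $\Delta=1$ and $a+c=9$ (so $a=6$, $c=3$), the two equations collapse to $\beta^2-\alpha^2=5$ and $\alpha^2+\beta^2=13$, whose unique positive integer solution is $\alpha=2$, $\beta=3$. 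Since $\lvert 2\rvert_2=1/2\ne 1=\lvert 3\rvert_2$, Corollary~\ref{coro:FRagain} yields proper fractional revival in $X(6,2,3)$, and since $a=6\ne 3=c$, vertices $0$ and $1$ are not cospectral.

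For infinitely many examples I scale by square factors. For each positive integer $t$ set $(a,k,c)=(6t^2,2t^2,3t^2)$; then $\sqrt{4k^2+(a-c)^2}=5t^2$ and $2k+a+c=13t^2$, so $\Delta=1$, $\alpha=2t$, $\beta=3t$ solve the system. The 2-adic condition persists since $\lvert 2t\rvert_2=\lvert t\rvert_2/2\ne\lvert t\rvert_2=\lvert 3t\rvert_2$, and the graphs have distinct orders $11t^2+2$, hence are pairwise non-isomorphic. Each one has $a\ne c$, so all produce proper fractional revival between non-cospectral vertices.

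There is no deep obstacle once Corollary~\ref{coro:FRagain} is in hand; the only subtlety is that naive scaling $(a,k,c)\mapsto (ta,tk,tc)$ fails because it forces $\alpha\sqrt{t},\beta\sqrt{t}$ to be integers, so one must scale by a perfect square. Choosing $t=s^2$ circumvents this and preserves the 2-adic inequality because the common factor $\lvert s\rvert_2$ cancels.
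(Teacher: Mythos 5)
Your proposal is correct, and it rests on the same pivot as the paper's proof: reduce everything to the Diophantine system of Corollary~\ref{coro:FRagain} and observe that $0$ and $1$ are cospectral exactly when $a=c$ (your computation of $\phi(X\diff 0,t)$ and $\phi(X\diff 1,t)$ is the same one the paper records in its ``Commutative algebra'' subsection). Where you diverge is in how infinitude is established. The paper parametrizes solutions in bulk: it takes any prime $p\equiv 1\pmod 4$, writes $p=f^2+g^2$ by Fermat, chooses $\Delta,\alpha,\beta$ with $p\mid\Delta(\beta^2-\alpha^2)$, and reads off explicit formulas for $(a,k,c)$, subject to a positivity constraint $\alpha/\beta>\sqrt{(\sqrt2-1)/(\sqrt2+1)}$. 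You instead solve the system once by hand via the Pythagorean triple $(3,4,5)$ — landing on $X(6,2,3)$, which is the paper's own example $X(3,2,6)$ up to swapping $a$ and $c$ — and then scale by perfect squares. Your route is more elementary and avoids the positivity bookkeeping entirely, since positivity is inherited from the base solution; the paper's route buys a much larger supply of non-trivially different examples (needed later for the polygamy construction, where the revival time must be $\pi/(2\ell+1)$). One small inaccuracy in your closing remark: linear scaling $(a,k,c)\mapsto(ta,tk,tc)$ does not actually fail. It gives $\theta_3^2=4t$ and $\theta_5^2=9t$, and writing $t=s^2u$ with $u$ square-free one takes $\Delta=u$, $\alpha=2s$, $\beta=3s$, which still satisfies $\lvert\alpha\rvert_2\ne\lvert\beta\rvert_2$; indeed the paper's second bullet ($a=3\Delta$, $k=2\Delta$, $c=6\Delta$) is exactly linear scaling by a square-free factor. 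This does not affect your argument — your square-scaled family is valid and already infinite — but the aside should be deleted or corrected.
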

\begin{proof}
We prove the result by showing that there are infinitely many positive integer triples $(a,k,c)$ such that proper fractional revival on $\{0,1\}$ occurs in $X(a,k,c)$ with vertices 0 and 1 non-cospectral. \\ 
Note that vertices 0 and 1 are non-cospectral in $X(a,k,c)$ if and only if $a\ne c$. 
Without loss of generality, assume $a<c$. 
Now we make use of Corollary~\ref{coro:FRagain}.
Let $p$ be a prime such that $p\equiv1\pmod{4}$, then by Fermat's Theorem on sums of two squares, 
$p=f^2+g^2$ 
for some positive integers $f>g$. 
Now we pick integers $\De$ (square-free) and $\al < \be$,  
such that $|\al|_2\neq|\be|_2$ and $p|\De(\be^2-\al^2)$. Let $d$ be the integer satisfying $\De(\be^2-\al^2)=pd=(f^2+g^2)d$. 
Then the triple $(a,k,c)$ with 
\begin{align*}
&k=fgd\\
&c=\frac{\De(\be^2+\al^2)+(f^2-g^2)d}{2}-fgd=\De \al^2+fd(f-g)\\
&a=\frac{\De(\be^2+\al^2)-(f^2-g^2)d}{2}-fgd=\De \al^2-gd(f-g)
\end{align*}
is a set of integer solutions of equations \eqref{eq:eig3} and  \eqref{eq:eig5}. 
If further 
\[
\frac{\be}{\al}<\sqrt{2}+1
\] 
 then $a,k,c$ are all positive integers.
Let $\tau=\frac{\pi}{\sqrt{\De}\gcd\{\al,\be\}}$, then there is proper fractional revival on $\{0,1\}$ at time $\tau$. 
There are infinitely many such graphs, as we have infinitely many choices for $p,\De,\al,\be$.

In particular, let $p=5$, then $f=2$ and $g=1$. 
If we let 
\begin{itemize}
\item
let $\De=5$, then we can pick any positive integer $\al$ and $\be=2\al$. In this case, $a=2\al^2, k=6\al^2$ and $c=11\al^2$.
\item
let $\be=3$ and $\al=2$, then we can pick any square free integer $\De$. In this case, $a=3\De, k=2\De$ and $c=6\De$.
\item
let $\be=6$ and $\al= 4$, then we can pick any square free integer $\De$. In this case, $a=12\De, k=8\De$ and $c=24\De$.
\item
let $\be=8$ and $\al=7$, then we can pick any square free integer $\De$.  In this case, $a=46\De, k=6\De$ and $c=55\De$. \qed
\end{itemize} 
\end{proof}

\begin{example}
Graph $X(3,2,6)$ in Figure~\ref{fig:FRnoncos} admits proper fractional revival between  (non-cospectral) vertices 0 and 1 at time $\pi$. Other examples include:\\
$X(6,3,14)$ at $ t = \pi / \sqrt{2}$,  
$X(6,4,12)$ at $t = \pi/ \sqrt{2}$, 
$X(9,6,18)$ at  $t = \pi/\sqrt{3}$, 
$X(2,6,11)$ at $ t = \pi/\sqrt{5}$,  
$X(2,6,28)$ at  $t = \pi/2$.
\begin{figure}[h]
\begin{center}
\definecolor{ududff}{rgb}{0.30196078431372547,0.30196078431372547,1.}
\begin{tikzpicture}[line cap=round,line join=round,>=triangle 45,x=1.0cm,y=1.0cm,scale=1.0]
\clip(0,1.6) rectangle (10.6,4.3);
\draw [line width=1.pt] (1.,4.)-- (3.,3.);
\draw [line width=1.pt] (1.,3.)-- (3.,3.);
\draw [line width=1.pt] (1.,2.)-- (3.,3.);
\draw [line width=1.pt] (3.,3.)-- (5.,4.);
\draw [line width=1.pt] (3.,3.)-- (5.,2.);
\draw [line width=1.pt] (5.,4.)-- (7.,3.);
\draw [line width=1.pt] (5.,2.)-- (7.,3.);
\draw [line width=1.pt] (7.,3.)-- (9.02,4.2);
\draw [line width=1.pt] (7.,3.)-- (9.02,3.74);
\draw [line width=1.pt] (7.,3.)-- (9.02,3.3);
\draw [line width=1.pt] (7.,3.)-- (9.04,2.74);
\draw [line width=1.pt] (7.,3.)-- (9.02,2.14);
\draw [line width=1.pt] (7.,3.)-- (9.02,1.72);

\draw (2.8,2.8) node[anchor=north west] {$0$};
\draw (6.8,2.8) node[anchor=north west] {$1$};
\begin{scriptsize}
\draw [fill=ududff] (1.,3.) circle (2.5pt);
\draw [fill=ududff] (1.,2.) circle (2.5pt);
\draw [fill=ududff] (1.,4.) circle (2.5pt);
\draw [fill=ududff] (3.,3.) circle (2.5pt);
\draw [fill=ududff] (5.,4.) circle (2.5pt);
\draw [fill=ududff] (5.,2.) circle (2.5pt);
\draw [fill=ududff] (7.,3.) circle (2.5pt);
\draw [fill=ududff] (9.02,3.3) circle (2.5pt);
\draw [fill=ududff] (9.02,3.74) circle (2.5pt);
\draw [fill=ududff] (9.02,4.2) circle (2.5pt);
\draw [fill=ududff] (9.04,2.74) circle (2.5pt);
\draw [fill=ududff] (9.02,2.14) circle (2.5pt);
\draw [fill=ududff] (9.02,1.72) circle (2.5pt);
\end{scriptsize}
\end{tikzpicture}
\caption{A graph with proper fractional revival between non-cospectral vertices 0 and 1}
\label{fig:FRnoncos}
\end{center}
\end{figure}
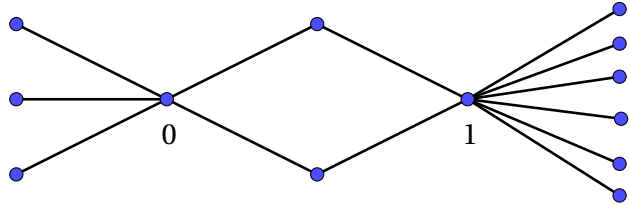
\end{example}

\begin{remark}
{\rm
When $k=1$, \eqref{eqn:sumsofsquares} implies that 4 is the difference of two nonzero integer squares (if $a\ne c$) or that $2=\De(\be^2-\al^2)$ with $\De,\al,\be\in\ints$ (if $a=c$). 
But there are no such integer pairs. 
Therefore, tree graphs in this family do not admit proper fractional revival. 
}
\end{remark}
Nevertheless, there is an infinite family of trees that admits proper fractional revival between cospectral vertices: Let $X$ be the graph obtained by connecting the two centers of two copies of the star $K_{1,a}$. Label the two centers vertices 0 and 1, respectively. Then $X$ admits proper fractional revival on $\{0,1\}$ at time $t=\frac{2\pi}{\sqrt{4a+1}}$.

\subsection{Commutative algebra}

In Lemma ~\ref{lem:SFC}, we show that for the graph $X=X(a,k,c)$ the induced algebra 
on $\{0,1\}$ is commutative by using the spectral decomposition of $A(X)$ and verifying that $\frac{(E_r)_{0,0}-(E_r)_{1,1}}{(E_r)_{0,1}}$ is independent of $r$ (given that $\theta_r$ lies
in the eigenvalue support of the vertex $0$). As a computational technique, this has serious 
difficulties because the entries of $E_r$ will be floating point numbers. We show how
to carry out this computation in exact arithmetic, using the polynomial condition in Theorem~\ref{thm:commuindal}(d).

The characteristic polynomial of $X(a,k,c)$ is:
\[
	t^{a+k+c-2} (t^4 - (a+2k+c)t^2 +ak +ck +ac).
\]
Recall that the vertex of degree $a+k$ is denoted by $0$, and the vertex of degree $c+k$
is denoted by $1$. 
The characteristic polynomials of $X\diff0$ and $X\diff1$ are, respectively,
\[
	t^{a+k+c-1}(t^2-c-k),\quad t^{a+k+c-1}(t^2-a-k).
\]
Finally
\[
	\phi(X\diff\{0,1\},t) = t^{a+k+c}, \quad\psi_{0,1}(X,t) = kt^{a+k+c-1}.
\]

We see that $\phi(X\diff 0,t)-\phi(X\diff 1,t)=\frac{a-c}{k}  \psi_{0,1}(X,t)$, 
therefore by Theorem~\ref{thm:commuindal},
the induced algebra of $X$ on $\{0,1\}$ is commutative. 

From 
\[
\left( (tI-A)^{-1}\right)_{a,b}=\frac{\left(\operatorname{adj}(tI-A)\right)_{b,a}}{\phi(X,t)} 
\text{ and }
\left( (tI-A)^{-1}\right)_{a,b}=\sum_r \frac{(E_r)_{a,b}}{t-\theta_r},
\]
 we get that 
for each $r$,
\[
	(E_r)_{\{0,1\},\{0,1\}}=
	\lim_{t\to\theta_r}\frac{t-\theta_r}{\phi(X,t)}
		\begin{bmatrix}
		 \phi(X\diff0,t)& \psi_{0,1}(X,t)\\ \psi_{0,1}(X,t)& \phi(X\diff1,t)
		 \end{bmatrix}
\]
Note that $(E_r)_{1,1}$ is the coefficient of $1/(t-\theta_r)$ in the partial
fraction expansion of 
$
	\frac{\phi(X\diff 1,t)}{\phi(X,t)},
$
and the other entries have similar expressions.

By way of example, for the triple $(3,2,6)$,
\begin{align*}
	\frac{\phi(X\diff0,t)}{\phi(X,t)} 
		&= \frac{1}{10}\left(\frac1{t - 3}+ \frac4{t - 2}+ \frac{4}{t + 2}
			+\frac1{t + 3}\right)\\
	\frac{\psi_{0,1}(t)}{\phi(X,t)}
		&= \frac2{10}\left(\frac1{t - 3}- \frac1{t-2}-\frac1{t+2}
			+\frac1{t+3}\right)\\
	\frac{\phi(X\diff1,t)}{\phi(X,t)} 
		&= \frac{1}{10}\left(\frac4{t - 3}+ \frac1{t - 2}+ \frac{1}{t + 2}
			+\frac4{t + 3}\right).
\end{align*}
The eigenvalues of $X$, 
as denoted in \eqref{eq:eigenvalues}, 
are 
$\theta_1=0, \, \theta_2=-2,\,  \theta_3=2,\, \theta_4=-3,\, \theta_5=3$, 
and 
\[
	(E_2)_{\{0,1\},\{0,1\}}=(E_3)_{\{0,1\},\{0,1\}}=\frac1{10}\begin{bmatrix}4&-2\\-2&1\end{bmatrix},\qquad
	(E_4)_{\{0,1\},\{0,1\}}=(E_5)_{\{0,1\},\{0,1\}}=\frac1{10}\begin{bmatrix}1&2\\2&4\end{bmatrix}.
\]
For any positive integer $m$, we get exactly the same matrices for the triples $(3m,2m,6m)$, and so all graphs
in this family admit proper fractional revival.

\section{No balanced fractional revival on $\{0,1\}$ in $X(a,k,c)$}
Again, let $a\leq c$, here we consider whether balanced fractional revival on $\{0,1\}$ could occur in $X(a,k,c)$, that is, whether there is a time $\tau$ such that $|U(\tau)_{0,0}|=|U(\tau)_{0,1}|=1/\sqrt{2}$. 
Let $X$ be a simple connected graph. 
Assume proper fractional revival occurs  on $\{1,2\}$ at time $\tau$ in $X$. 
Then $U(\tau)_{\{1,2\},\{1,2\}}$ is unitary. Take  $\theta_1\in C^+, \theta_2\in C^-$, 
then by equation \eqref{eq:profrspec} 
\[
U(\tau)_{\{1,2\},\{1,2\}}
=  e^{i\tau\theta_1}\begin{bmatrix}
p^2+e^{i\tau\theta}q^2   &   pq(1-e^{i\tau\theta}) \\
pq(1-e^{i\tau\theta})       &   q^2+e^{i\tau\theta}p^2
\end{bmatrix},
\]
for some positive numbers $p\geq q$ with $p^2+q^2=1$, 
where $\theta=\theta_2-\theta_1$. 

Now, balanced fractional revival occurs at time $\tau$ if and only if 
$\frac12
=|pq(1-e^{i\tau\theta})|^2
=2p^2q^2(1-\cos(\tau\theta))$,
which implies that 
\begin{align}\label{eq:ratsquarecos}
\cos(\tau\theta)& =1-\frac{1}{4p^2q^2} \nonumber \\
& =\frac{4p^2q^2-(p^2+q^2)^2}{4p^2q^2}  \quad (\text{ as } p^2+q^2=1) \nonumber \\
&= -\frac{(p^2-q^2)^2}{4p^2q^2}  \nonumber \\
&= -\frac14(\frac pq-\frac qp)^2 \in\mathbb{Q}, 
\end{align}
where the rationality comes from the fact that 
$(E_r)_{1,1}-(E_r)_{2,2}=(\frac pq-\frac qp)(E_r)_{1,2}$ for each $r$,
and therefore $\frac pq-\frac qp$ is the rational scalar 
 $\ga$ in Theorem~\ref{thm:commuindal}.

Now assume that $X$ is also periodic at vertices $1,2$, say at time $\tau_1$. Then $e^{i\tau_1\theta_1}=e^{i\tau_1\theta_2}$, that is $e^{i\tau_1\theta}=1$,
or equivalently, $\tau_1\theta=2k\pi$ for some integer $k$.
Assume that the minimum time when proper fractional revival occurs on $\{1,2\}$ is $\tau_0$, 
then $\tau = r \tau_0$ and $\tau_1 = r_1 \tau_0$ 
for some $r,r_1\in\ints$. 
Therefore $\frac{\tau}{\tau_1}\in\mathbb{Q}$, and $\tau\theta$ is a rational multiple of $\pi$, with $\cos(\tau\theta)\in\mathbb{Q}$, which implies that $\cos(\tau\theta)\in\{0,-\frac12,-1\}$ as $\cos(\tau\theta)\leq 0$. 
But equation \eqref{eq:ratsquarecos} in fact tells us that $\sqrt{-\cos(\tau\theta)}\in\mathbb{Q}$. Therefore $\cos(\tau\theta)\in\{0,-1\}$. 

If $\cos(\tau\theta)=0$, 
then $\ga=\frac pq-\frac qp=0$ and vertices $1$ and $2$ are cospectral. It can be shown that $X$ admits perfect state transfer between $1$ and $2$ at time $2\tau$ in this case \cite{fr}.

If $\cos(\tau\theta)=-1$, 
then $e^{i\tau\theta}=-1$, $e^{i2\tau \theta}=1$ and 
$X$ is periodic at vertices $1,2$ at time $2\tau$. In this case, 
$\ga=\frac pq-\frac qp=2$, 
that is, $(A^k_{1,1}-A^k_{2,2})=2A^k_{1,2}$ for any positive integer $k$. In fact, $\frac pq=1+\sqrt{2}$ in this case. 
The achieved results are summarized in the following lemma. 
\begin{lemma}\label{lem:balFR}
Assume that graph $X$ admits balanced fractional revival  on $\{a,b\}$ at time $\tau$ 
and  non-proper fractional revival on $\{a,b\}$ at time $t$. Then either vertices $a$ and $b$ are cospectral and $X$ admits perfect state transfer between them at time $2\tau$, or $a$ and $b$ are non-cospectral and $X$ is periodic at them at time $2\tau$. In the second case, $(A^k)_{a,a}-A^k_{b,b}=2A^k_{a,b}$ for all nonnegative integer $k$, 
that is, the scalar $\ga$ in Theorem~\ref{thm:commuindal} satisfies $\ga=2$.
\end{lemma}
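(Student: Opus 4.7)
The plan is to formalize the chain of reasoning already sketched immediately above the statement. First, I would use \eqref{eq:profrspec} to write out $U(\tau)_{\{a,b\},\{a,b\}}$ explicitly and read off
\[
|U(\tau)_{a,b}|^2 = 2p^2q^2\bigl(1-\cos(\tau\theta)\bigr),
\]
where $\theta = \theta_2 - \theta_1$ for any choice of $\theta_1\in C^+, \theta_2\in C^-$. The balanced condition $|U(\tau)_{a,b}|^2 = 1/2$, together with $p^2+q^2=1$, gives $\cos(\tau\theta) = -\tfrac14(p/q-q/p)^2 = -\tfrac14\gamma^2$, where $\gamma = p/q-q/p$ is the scalar of Theorem~\ref{thm:commuindal}. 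Since proper fractional revival forces commutativity of $\cA(\{a,b\})$, that theorem also tells us $\gamma\in\rats$, so $\cos(\tau\theta)\in\rats$.

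Next, I would translate the non-proper fractional revival hypothesis at time $t$ into periodicity at the vertices $a$ and $b$: this forces $e^{it\theta_r}$ to be constant on the eigenvalue support of $a$, and in particular $e^{it\theta}=1$, so $t\theta\in 2\pi\ints$. Let $\tau_0$ be the minimum time at which proper fractional revival occurs on $\{a,b\}$; then both $\tau$ and $t$ are integer multiples of $\tau_0$, so $\tau/t\in\rats$, and hence $\tau\theta$ is a rational multiple of $\pi$.

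The key step is then an application of Niven's theorem: a rational multiple of $\pi$ whose cosine is rational has cosine in $\{0,\pm\tfrac12,\pm 1\}$. Since $\cos(\tau\theta)\le 0$ and furthermore $\sqrt{-\cos(\tau\theta)} = |\gamma|/2$ must itself be rational, the value $-\tfrac12$ is excluded, leaving $\cos(\tau\theta)\in\{0,-1\}$. This is the pivotal step, and the only genuine obstacle: ensuring the rationality of $\tau\theta/\pi$ is carefully justified from the assumption that proper and non-proper revivals share a common minimum time scale $\tau_0$.

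Finally, I would split into the two remaining cases. If $\cos(\tau\theta)=0$, then $\gamma=0$, so $(E_r)_{a,a} = (E_r)_{b,b}$ for all $r$, i.e., $a$ and $b$ are cospectral; perfect state transfer at $2\tau$ then follows from the known result (cf.\ \cite{fr}), or directly by computing $U(2\tau)_{\{a,b\},\{a,b\}}$ from \eqref{eq:profrspec} with $p=q=1/\sqrt{2}$ and $e^{i\tau\theta}=\pm i$. If $\cos(\tau\theta)=-1$, then $\gamma^2=4$; choosing the sign convention $\gamma=2$, the relation $(A^k)_{a,a}-(A^k)_{b,b} = 2(A^k)_{a,b}$ is just Theorem~\ref{thm:commuindal}(c), and $e^{i2\tau\theta}=1$ together with the shared phase factor yields periodicity at $a$ and $b$ at time $2\tau$.
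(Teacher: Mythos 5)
Your proposal is correct and follows essentially the same route as the paper: the explicit $2\times 2$ block from \eqref{eq:profrspec}, the identity $\cos(\tau\theta)=-\tfrac14\gamma^2\in\rats$, rationality of $\tau\theta/\pi$ via the common minimum time $\tau_0$, Niven's theorem with the exclusion of $-\tfrac12$ by rationality of $|\gamma|/2$, and the final case split. No substantive differences to report.
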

\begin{corollary}\label{coro:nobalFRakc}
$X(a,k,c)$ does not admit balanced fractional revival on $\{0,1\}$ for any positive integers $a,k,c$.
\end{corollary}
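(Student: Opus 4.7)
The plan is to argue by contradiction, applying Lemma~\ref{lem:balFR} to split into two cases and ruling out each for $X(a,k,c)$. Suppose balanced fractional revival on $\{0,1\}$ occurs at some time $\tau$. Since balanced FR is in particular fractional revival, Corollary~\ref{coro:FRimpperi} forces $X$ to be periodic, so non-proper FR on $\{0,1\}$ also occurs at a period of $X$. Hence Lemma~\ref{lem:balFR} applies: either (i) vertices $0$ and $1$ are cospectral, or (ii) they are non-cospectral and $\gamma=2$, where $\gamma$ is the scalar of Theorem~\ref{thm:commuindal}.

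The first ingredient I would use is that $\gamma=(a-c)/k$ for this graph, as is immediate from the identity $\phi(X\diff 0,t)-\phi(X\diff 1,t)=\tfrac{a-c}{k}\,\psi_{0,1}(X,t)$ computed in the commutative-algebra subsection above, together with Theorem~\ref{thm:commuindal}(d). In case (ii) this forces $|a-c|=2k$, so $\sqrt{4k^2+(a-c)^2}=2k\sqrt{2}$, an irrational number; but Corollary~\ref{coro:FRagain} requires it to equal the integer $\Delta(\beta^2-\alpha^2)$, a contradiction.

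For case (i), $a=c$, I would rule out balanced FR directly from the structure of $U(\tau)_{\{0,1\}}$. The automorphism of $X(a,k,a)$ swapping $0$ and $1$, together with the symmetry of $U(\tau)$, forces $U(\tau)_{\{0,1\}}=\pmat{s&t\\t&s}$. The crux is that $s$ and $t$ are real. Indeed, $X(a,k,c)$ is bipartite with eigenvalues $0,\pm\theta_3,\pm\theta_5$, and the explicit eigenvector formulas in Lemma~\ref{lem:SFC} satisfy $(v_2)_{\{0,1\}}=-(v_3)_{\{0,1\}}$ and $(v_4)_{\{0,1\}}=-(v_5)_{\{0,1\}}$, which gives $(E_2)_{\{0,1\}}=(E_3)_{\{0,1\}}$ and $(E_4)_{\{0,1\}}=(E_5)_{\{0,1\}}$; combined with $(E_1)_{\{0,1\}}=0$ (since $\theta_1=0$ is not in the eigenvalue support of vertex $0$), this yields
\[
U(\tau)_{\{0,1\}}=2\cos(\tau\theta_3)(E_3)_{\{0,1\}}+2\cos(\tau\theta_5)(E_5)_{\{0,1\}},
\]
a real matrix. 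Since FR makes this matrix unitary, it is real orthogonal, so $s^2+t^2=1$ and $2st=0$. Hence $|t|\in\{0,1\}$, never $1/\sqrt{2}$, and balanced FR is ruled out.

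The main obstacle I anticipate is justifying the realness step in case (i); once the sign relations $(v_j)_{\{0,1\}}=-(v_{j+1})_{\{0,1\}}$ for $j=2,4$ are read off from Lemma~\ref{lem:SFC}, the rest reduces to noting that $X(a,k,c)$ is bipartite and that the singletons $\{0\}$ and $\{1\}$ in the equitable partition ensure $(E_1)_{\{0,1\}}=0$ and $\|u_j\|=\|v_j\|$ for the lifted eigenvectors, so the collapse to a $2\cos$-combination of real projectors is automatic.
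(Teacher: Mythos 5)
Your proof is correct, and for the non-cospectral case it is essentially the paper's argument in different clothes: the paper contradicts the irrationality of $p/q=1+\sqrt2$ (forced by $\ga=2$) against the rationality of the explicit eigenvector ratio $\frac{a-c-\sqrt{4k^2+(a-c)^2}}{2k}$ guaranteed by Corollary~\ref{coro:FRagain}, while you contradict the irrationality of $\sqrt{4k^2+(a-c)^2}=2k\sqrt2$ (forced by $|\ga|=|a-c|/k=2$) against the integrality of $\Delta(\beta^2-\alpha^2)$; these are the same obstruction. Where you genuinely depart from, and improve on, the paper is the cospectral branch. The paper's proof consists of the single assertion that balanced fractional revival forces $p/q=1+\sqrt2$, but that conclusion is only the second alternative of Lemma~\ref{lem:balFR}; the first alternative ($a=c$, vertices $0$ and $1$ cospectral, with perfect state transfer at time $2\tau$) is not addressed there, and it is not vacuous a priori --- $P_2$ shows that cospectral balanced fractional revival is possible in general. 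Your realness argument closes this: since $\th_2=-\th_3$, $\th_4=-\th_5$, $(E_1)_{\{0,1\},\{0,1\}}=0$, and the sign pattern of the eigenvectors in \eqref{eq:eigvec} gives $(E_2)_{\{0,1\},\{0,1\}}=(E_3)_{\{0,1\},\{0,1\}}$ and $(E_4)_{\{0,1\},\{0,1\}}=(E_5)_{\{0,1\},\{0,1\}}$, the block collapses to $2\cos(\tau\th_3)(E_3)_{\{0,1\},\{0,1\}}+2\cos(\tau\th_5)(E_5)_{\{0,1\},\{0,1\}}$, a real symmetric unitary matrix; cospectrality equalizes its diagonal, and a real orthogonal $\pmat{s&t\\t&s}$ has $2st=0$, so $|t|\in\{0,1\}$. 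One small caution: the equal-diagonal step is essential (a real symmetric orthogonal matrix with $s=-u$ can have $|t|=1/\sqrt2$), so do keep the cospectrality hypothesis explicit there; with that, your write-up is complete and in fact more careful than the published one.
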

\begin{proof}
From Lemma~\ref{lem:balFR} we know that if balanced fractional revival occurs between vertices 0 and 1, then $\frac pq=1+\sqrt{2} \notin \mathbb{Q}$. From equation~\eqref{eq:eigvec} we know $\frac pq =\frac{-c+a-\sqrt{4k^2+(a-c)^2}}{2k}$, which is a rational number by Corollary~\ref{coro:FRagain}, a contradiction.
\end{proof}

\section{Fractional revival is polygamous}
Let $X$ be a graph. 
If there is perfect state transfer from $a$ to $b$ and from $a$ to $c$, 
then $b=c$. 
For fractional revival, 
weighted graphs where there is proper fractional revival on $\{a,b\}$ and on $\{a,c\}$ with $a,b,c$ distinct vertices of $X$ were constructed \cite{FFR}, 
but no unweighted graph examples were known. 
Here we construct unweighted graphs where there is proper fractional revival on $\{a,b\}$ and on $\{a,c\}$ for $b\ne c$. 

Assume that $X$ is a graph on $m$ vertices with adjacency matrix $A(X)$, 
and $Y$ is a graph on $n$ vertices with adjacency matrix $A(Y)$. 
Their Cartesian product $X\square Y$ is the graph with adjacency matrix 
$A(X\square Y)=I_m\otimes A(Y) + A(X)\otimes I_n$, 
and the transition matrix of continuous time quantum walk on $X\square Y$ at time $\tau$ is 
\begin{equation}\label{eq:cartU}
U_{X\square Y}(\tau)=U_X(\tau)\otimes U_Y(\tau).
\end{equation}
Therefore if there is a time $t$ such that $X$ is periodic at vertex $x_1$ and graph $Y$ admits proper fractional revival on $\{y_1,y_2\}$, 
then $X\square Y$ admits proper fractional revival on $\{(x_1,y_1),(x_1,y_2)\}$ at time $t$. 
Now we take $X$ to be  $K_2$,
which is periodic at integer multiples of $\pi$, 
admits perfect state transfer at odd integer multiples of $\pi/2$, 
and admits proper fractional revival at all other time. 
By \eqref{eq:cartU} 
we know that if for some positive integer $\ell$ we can find a graph $Y$ that admits proper fractional revival on $\{a,b\}$ at time $\tau=\frac{\pi}{2\ell+1}$ and is periodic at vertices $a,b$ at time $2\tau$, 
then $K_2\square Y$ admits fractional revival between vertices $(0,a)$ and $(1,a)$ at time $2\tau$, 
and admits proper fractional revival on $(0,a)$ and $(0,b)$ at time $t = \pi=(2\ell+1)\tau$. 
 
Corollary \ref{coro:frperi} tells us that 
if proper fractional revival occurs on $\{0,1\}$ at time $\tau$ in $X(a,k,c)$, then it also occurs at time $(2j+1)\tau$, and $X(a,k,c)$ is periodic at vertices 0 and 1  at time $2j\tau$ for any positive integer $j$.
The graph $X_1=X(16,36,37)$ admits fractional revival between vertex 0 and 1 at $\frac{\pi}{5}$, 
the graph $X_2=X(10,30,55)$ admits fractional revival at $\frac{\pi}{5}$, 
and the graph $X_3=X(27,18,54)$ admits fractional revival at time $\frac{\pi}{3}$. 
In fact there are infinitely many such graphs.  

\begin{theorem}
There are infinitely many unweighted graphs where the fractional revival pairs overlap (fractional revival is polygamous).
\end{theorem}
\begin{proof}
We show that there are  infinitely many graphs in the family $X=X(a,k,c)$ such that $X$ admits fractional revival at time $\frac{\pi}{2\ell+1}$ for some integer $\ell$. 
Let $p$ be a prime such that $p\equiv 1\pmod{4}$. Then $p = f^2+g^2$ for some $f>g$ by Fermat's theorem. Let $r$ be any positive integer.  Then the graph $X(a,k,c)$ with  
\[
a=p^2r^2-gp(2r+1)(f-g), k=fgp(2r+1), c=p^2r^2+fp(2r+1)(f-g)
\]
 admits proper fractional revival at time $\tau=\frac{\pi}{p}$. 
 By the argument before the theorem, 
 we know that $K_2\square X$ admits proper fractional revival with overlapping pairs. 
 \qed
\end{proof}

\section{Further questions}

We showed in Corollary~\ref{coro:nobalFRakc} that 
balanced fractional revival does not occur in $X(a,k,c)$, 
the only class of graphs known where proper fractional revival occur between non-cospectral vertices. Apart from looking for other families,  
we wonder: 
\begin{question}
Is there a tree $T$ where there is proper fractional revival between two non-cospectral vertices $a$ and $b$?
\end{question}

\begin{question}
Can balanced fractional revival occur between non-cospectral vertices?
\end{question}
Assume proper fractional revival occurs on $\{a,b\}$. 
Then there exists a square-free integer $\De$
such that $\frac{\theta_r-\theta_s}{\sqrt{\De}}\in\ints$ for any $\theta_r,\theta_s\in C^+$ or any  $\theta_r,\theta_s\in C^-$.  
If further $a$ and $b$ are cospectral, 
then the two sets $C^+$ and $C^-$ are both closed under taking algebraic conjugates, 
and therefore there exists a square-free integer $\De$ and integers $a^+,a^-,b_r$
such that  $\theta_r=\frac{a^+ +b_r\sqrt{\De}}{2}$ for any $\theta_r\in C^+$,
and $\theta_s=\frac{a^- +b_r\sqrt{\De}}{2}$ for any $\theta_s\in C^-$ \cite{fr}. 
When proper fractional revival occurs on $X(a,k,c)$, 
the eigenvalues in the support are quadratic integers as in the cospectral case. 
Are there unweighted graphs with proper fractional revival such that 
$C^+$ and $C^-$ are not closed under taking algebraic conjugates (then it occurs between non-cospectral vertices), or 
\begin{question}
Is there a graph $X$ with proper fractional revival on $\{a,b\}$ such that
eigenvalues in the support of $a$ are not integers or quadratic integers?
Vertices $a$ and $b$ must be noncospectral in $X$ if it happens.
\end{question}

\begin{bibdiv}
\begin{biblist}

\bib{circ}{article}{
   author={Ba\v{s}i\'{c}, Milan},
   title={Characterization of quantum circulant networks having perfect
   state transfer},
   journal={Quantum Inf. Process.},
   volume={12},
   date={2013},
   number={1},
   pages={345--364},
      }

\bib{gwithfr}{article}{
   author={Bernard, Pierre-Antoine},
   author={Chan, Ada},
   author={Loranger, \'{E}rika},
   author={Tamon, Christino},
   author={Vinet, Luc},
   title={A graph with fractional revival},
   journal={Phys. Lett. A},
   volume={382},
   date={2018},
   number={5},
   pages={259--264},
}

\bib{grouptransfer}{article}{
      title={Continuous time quantum walks on graphs: group state transfer}, 
      author={Brown, Luke C.}, 
      author={Martin, William J.}, 
      author={Wright, Duncan },
      year={2021},
      eprint={2103.08837},
      archivePrefix={arXiv},
}

\bib{FFR}{article}{
   author={Chan, Ada},
   author={Coutinho, Gabriel},
   author={Drazen, Whitney},
   author={Eisenberg, Or},
   author={Godsil, Chris},
   author={Lippner, Gabor},
   author={Kempton, Mark},
   author={Tamon, Christino},
   author={Zhan, Hanmeng},
   title={Fundamentals of fractional revival in graphs},
    year={2020},
    eprint={	arXiv:2004.01129 [math.CO]}
}

\bib{fr}{article}{
   author={Chan, Ada},
   author={Coutinho, Gabriel},
   author={Tamon, Christino},
   author={Vinet, Luc},
   author={Zhan, Hanmeng},
   title={Quantum fractional revival on graphs},
   journal={Discrete Appl. Math.},
   volume={269},
   date={2019},
   pages={86--98},
}

\bib{cubelike}{article}{
   author={Cheung, Wang-Chi},
   author={Godsil, Chris},
   title={Perfect state transfer in cubelike graphs},
   journal={Linear Algebra Appl.},
   volume={435},
   date={2011},
   number={10},
   pages={2468--2474},
}

\bib{PST05}{article}{
 author = {Christandl, Matthias},
 author = {Datta, Nilanjana},
 author = {Dorlas, Tony C.},
 author = {Ekert, Artur},
 author = {Kay, Alastair},
 author ={Landahl, Andrew J.},
 title = {Perfect transfer of arbitrary states in quantum spin networks},
  journal = {Phys. Rev. A},
  volume = {71},
  issue = {3},
  pages = {032312},
  numpages = {11},
  year = {2005},
  month = {Mar},
   }

\bib{thebook}{article}{
   author={Coutinho, Gabriel},
   author={Godsil, Chris},
   title={Graph spectra and continuous quantum walks},
   journal={Lecture notes},
   date={2021},
}

\bib{drg}{article}{
   author={Coutinho, Gabriel},
   author={Godsil, Chris},
   author={Guo, Krystal},
   author={Vanhove, Fr\'{e}d\'{e}ric},
   title={Perfect state transfer on distance-regular graphs and association
   schemes},
   journal={Linear Algebra Appl.},
   volume={478},
   date={2015},
   pages={108--130},
}

\bib{frchain}{article}{
   author={Genest, Vincent X.},
   author={Vinet, Luc},
   author={Zhedanov, Alexei},
   title={Exact fractional revival in spin chains},
   journal={Modern Phys. Lett. B},
   volume={30},
   date={2016},
   number={26},
   pages={1650315, 7},
}

\bib{frchain2}{article}{
   author={Genest, Vincent X. },
   author={Vinet, Luc},
   author={Zhedanov, Alexei},
   title={Quantum spin chains with fractional revival},
   journal={Ann. Physics},
   volume={371},
   date={2016},
   pages={348--367},
}

\bib{algcomb}{book}{
   author={Godsil, C. D.},
   title={Algebraic {C}ombinatorics},
   series={Chapman and Hall Mathematics Series},
   publisher={Chapman \& Hall, New York},
   date={1993},
   pages={xvi+362},
   isbn={0-412-04131-6},
}

\bib{Periodic}{article}{
   author={Godsil, Chris},
   title={Periodic graphs},
   journal={Electron. J. Combin.},
   volume={18},
   date={2011},
   number={1},
   pages={Paper 23, 15},
}

\bib{godsil2017real}{article}{
      title={Real state transfer}, 
      author={Godsil, Chris },
      year={2017},
      eprint={1710.04042},
      archivePrefix={arXiv},
      }

\bib{transfer}{article}{
   author={Godsil, Chris},
   title={State transfer on graphs},
   journal={Discrete Math.},
   volume={312},
   date={2012},
   number={1},
   pages={129--147},
}

\bib{strongcos}{article}{
      title={Strongly cospectral vertices}, 
      author={Godsil, Chris }
      author={Smith, Jamie},
      year={2017},
      eprint={1709.07975},
      archivePrefix={arXiv},
}

\bib{pstmono}{article}{
  author = {Kay, Alastair},
  title = {Basics of perfect communication through quantum networks},
  journal = {Phys. Rev. A},
  volume = {84},
  issue = {2},
  pages = {022337},
  numpages = {8},
  year = {2011},
  month = {Aug},
}

\end{biblist}
\end{bibdiv}

\end{document}